\documentclass[12pt,reqno]{amsart}

\textwidth  6.5in
\textheight 8.5in

\topmargin0cm
\oddsidemargin0cm
\evensidemargin0cm




\usepackage[sanserif,full]{complexity}

\usepackage{graphicx}
\usepackage{amssymb}
\usepackage{amsmath}
\usepackage{amsthm}





\newtheorem{thm}{Theorem}
\newtheorem{defn}[thm]{Definition}
\newtheorem{lem}[thm]{Lemma}

\newtheorem{prob}[thm]{Problem}

\begin{document}

\title{Full Spark Frames}

\author[Alexeev]{Boris Alexeev}
\address[Alexeev]{Department of Mathematics, Princeton University, Princeton, New Jersey 08544, USA; E-mail: balexeev@math.princeton.edu}

\author[Cahill]{Jameson Cahill}
\address[Cahill]{Department of Mathematics, University of Missouri, Columbia, Missouri 65211, USA; E-mail: jameson.cahill@gmail.com}

\author[Mixon]{Dustin G.~Mixon}
\address[Mixon]{Program in Applied and Computational Mathematics, Princeton University, Princeton, New Jersey 08544, USA; E-mail: dmixon@princeton.edu}

\date{}

\keywords{Frames, spark, sparsity, erasures}

\subjclass[2000]{42C15, 68Q17}

\begin{abstract}
Finite frame theory has a number of real-world applications.
In applications like sparse signal processing, data transmission with robustness to erasures, and reconstruction without phase, there is a pressing need for deterministic constructions of frames with the following property: every size-$M$ subcollection of the $M$-dimensional frame elements is a spanning set.
Such frames are called full spark frames, and this paper provides new constructions using the discrete Fourier transform.
Later, we prove that full spark Parseval frames are dense in the entire set of Parseval frames, meaning full spark frames are abundant, even if one imposes an additional tightness constraint.
Finally, we prove that testing whether a given matrix is full spark is hard for $\NP$ under randomized polynomial-time reductions, indicating that deterministic full spark constructions are particularly significant because they guarantee a property which is otherwise difficult to check.
\end{abstract}

\thanks{The authors thank Profs.~Peter G.~Casazza and Matthew Fickus for discussions on full spark frames, Prof. Dan Edidin and Will Sawin for discussions on algebraic geometry, and the anonymous referees for very helpful comments and suggestions.
BA was supported by the NSF Graduate Research Fellowship under Grant No.~DGE-0646086, JC was supported by NSF Grant No.~DMS-1008183, DTRA/NSF Grant No.~DMS-1042701 and AFOSR Grant No.~FA9550-11-1-0245, and DGM was supported by the A.B.~Krongard Fellowship. The views expressed in this article are those of the authors and do not reflect the official policy or position of the United States Air Force, Department of Defense, or the U.S. Government.
}

\maketitle

\section{Introduction}

A \emph{frame} over a Hilbert space $\mathcal{H}$ is a collection of vectors $\{f_i\}_{i\in\mathcal{I}}\subseteq\mathcal{H}$ with \emph{frame bounds} $0<A\leq B<\infty$ such that
\begin{equation}
\label{eq.frame defn}
A\|x\|^2\leq\sum_{i\in\mathcal{I}}|\langle x,f_i\rangle|^2\leq B\|x\|^2
\qquad
\forall x\in\mathcal{H}.
\end{equation}
For a finite frame $\{f_n\}_{n=1}^N$ in an $M$-dimensional Hilbert space $\mathbb{H}_M$, the upper frame bound $B$ which satisfies \eqref{eq.frame defn} trivially exists.
In this finite-dimensional setting, the notion of being a frame solely depends on the lower frame bound $A$ being strictly positive, which is equivalent to having the frame elements $\{f_n\}_{n=1}^N$ span the Hilbert space.

In practice, frames are chiefly used in two ways.
The \emph{synthesis operator} $F\colon\mathbb{C}^N\rightarrow\mathbb{H}_M$ of a finite frame $F=\{f_n\}_{n=1}^N$ is defined by $Fy:=\sum_{n=1}^Ny[n]f_n$.
As such, the $M\times N$ matrix representation $F$ of the synthesis operator has the frame elements $\{f_n\}_{n=1}^N$ as columns, with the natural identification $\mathbb{H}_M\cong\mathbb{C}^M$.
Note that here and throughout, we make no notational distinction between a frame $F$ and its synthesis operator.
The adjoint of the synthesis operator is called the \emph{analysis operator} $F^*\colon\mathbb{H}_M\rightarrow\mathbb{C}^N$, defined by $(F^*x)[n]:=\langle x,f_n\rangle$.

Classically, frames are used to redundantly decompose signals, $y=F^*x$, before synthesizing the corresponding frame coefficients, $z=Fy=FF^*x$, and so the \emph{frame operator} $FF^*\colon\mathbb{H}_M\rightarrow\mathbb{H}_M$ is often analyzed to determine how well this process preserves information about the original signal $x$.
In particular, if the frame bounds are equal, the frame operator has the form $FF^*=AI_M$, and so signal reconstruction is rather painless: $x=\frac{1}{A}FF^*x$; in this case, the frame is called \emph{tight}.
Oftentimes, it is additionally desirable for the frame elements to have unit norm, in which case the frame is a \emph{unit norm frame}.
Moreover, the \emph{worst-case coherence} between unit norm frame elements $\mu:=\max_{i\neq j}|\langle f_i,f_j\rangle|$ satisfies $\mu^2\geq\frac{N-M}{M(N-1)}$, and equality is achieved precisely when the frame is tight with $|\langle f_i,f_j\rangle|=\mu$ for all distinct pairs $i,j\in\{1,\ldots,N\}$~\cite{StrohmerH:acha03}; in this case, the frame is called an \emph{equiangular tight frame} (ETF).

The utility of unit norm tight frames and ETFs is commonly expressed in terms of a scenario in which frame coefficients $\{(F^*x)[n]\}_{n=1}^N$ are transmitted over a noisy or lossy channel before reconstructing the signal:
\begin{equation}
\label{eq.reconstruction}
y=\mathcal{D}(F^*x),\qquad \tilde{x}=(FF^*)^{-1}Fy,                                                                                                                                                                                                                  \end{equation}
where $\mathcal{D}(\cdot)$ represents the channel's random and not-necessarily-linear deformation process.
Using an additive white Gaussian noise model, Goyal~\cite{Goyal:phd98} established that, of all unit norm frames, unit norm tight frames minimize mean squared error in reconstruction.
For the case of a lossy channel, Holmes and Paulsen~\cite{HolmesP:laa04} established that, of all tight frames, unit norm tight frames minimize worst-case error in reconstruction after one erasure, and that equiangular tight frames minimize this error after two erasures.
We note that the reconstruction process in \eqref{eq.reconstruction}, namely the application of $(FF^*)^{-1}F$, is inherently blind to the effect of the deformation process of the channel.
This contrasts with P\"{u}schel and Kova\v{c}evi\'{c}'s more recent work~\cite{PuschelK:dcc05}, which describes an adaptive process for reconstruction after multiple erasures; we will return to this idea later.

Another application of frames is sparse signal processing.
This field concerns signal classes which are sparse in some basis.
As an example, natural images tend to be nearly sparse in the wavelet basis~\cite{DavenportDEK:csta11}.
Some applications have signal sparsity in the identity basis~\cite{MixonQKF:icassp11}.
Given a signal $x$, let $\Psi$ represent its sparsifying basis and consider
\begin{equation}
\label{eq.sparse signal processing}
y=F\Psi^* x+e,
\end{equation}
where $F$ is the $M\times N$ synthesis operator of a frame, $\Psi^* x$ has at most $K$ nonzero entries, and $e$ is some sort of noise.
When given measurements $y$, one typically wishes to reconstruct the original vector $x$; viewing $F$ as a sensing matrix, this process is commonly referred to as \emph{compressed sensing} since we often take $M\ll N$.
Note that $y$ can be viewed as a noisy linear combination of a few unknown frame elements, and the goal of compressed sensing is to determine which frame elements are active in the combination, and further estimate the scalar multiples used in this combination.
This problem setup is very related to that of sparse approximation, in which signals are known to be expressible as a sparse combination of elements from an \emph{overcomplete} dictionary; in this case, the dictionary elements form the columns of $F$, and we again wish to determine the scalar multiples used in a given sparse combination.

In order to have any sort of inversion process for \eqref{eq.sparse signal processing}, even in the noiseless case where $e=0$, $F$ must map $K$-sparse vectors injectively.
That is, we need $Fx_1\neq Fx_2$ for any distinct $K$-sparse vectors $x_1$ and $x_2$.
Subtraction then gives that $2K$-sparse vectors like $x_1-x_2$ cannot be in the nullspace of $F$.
This identification led to the following definition~\cite{DonohoE:pnas03}:

\begin{defn}
The \emph{spark} of a matrix $F$ is the size of the smallest linearly dependent subset of columns, i.e.,
\begin{equation*}
\mathrm{Spark}(F)
=\min\{\|x\|_0:Fx=0,~x\neq0\}.
\end{equation*}
\end{defn}

Using the above analysis, Donoho and Elad~\cite{DonohoE:pnas03} showed that a signal $x$ with sparsity level $<\mathrm{Spark}(F)/2$ is necessarily the unique sparsest solution to $y=Fx$, and furthermore, there exist signals $x$ of sparsity level $\geq\mathrm{Spark}(F)/2$ which are not the unique sparsest solution to $y=Fx$.
This demonstrates that matrices with larger spark are naturally equipped to support signals with larger sparsity levels.
One is naturally led to consider matrices that support the largest possible sparse signal class; we say an $M\times N$ matrix $F$ is \emph{full spark} if its spark is as large as possible, i.e., $\mathrm{Spark}(F)=M+1$.
Equivalently, $M\times N$ full spark matrices have the property that every $M\times M$ submatrix is invertible; as such, a full spark matrix is necessarily full rank, and therefore a frame.

That being said, while the submatrices of a full spark matrix will be invertible, they may not be well-conditioned.
Moreover, the conditioning of these submatrices is an important feature for compressed sensing; Cand\`{e}s~\cite{Candes:08} gives an elegant proof that sensing matrices with well-conditioned submatrices, specifically, satisfying the \emph{restricted isometry property (RIP)}, allow for stable and efficient recovery.
Unfortunately, for deterministic matrices, it is difficult to determine the conditioning of every submatrix; to date, no deterministic RIP matrix is known to perform optimally~\cite{FickusM:spie11}, and in one respect, this difficulty is precisely the barrier to significant progress on the Kadison-Singer problem~\cite{CasazzaT:06}.
However, some work has been done to test whether \emph{most} submatrices are well-conditioned~\cite{Tropp:acha08}, and better yet, other work has managed to achieve RIP-like performance without requiring all submatrices to be well-conditioned~\cite{BajwaCM:acha11}.
Regardless, good sensing matrices for compressed sensing must necessarily have large spark to enable reconstruction~\cite{DonohoE:pnas03}, and so building such matrices could serve as one step toward optimal deterministic RIP matrices.

In sparse signal processing, the specific application of full spark frames has been studied for some time.
In 1997, Gorodnitsky and Rao~\cite{GorodnitskyR:97} first considered full spark frames, referring to them as matrices with the \emph{unique representation property} (for reasons discussed above).
Since~\cite{GorodnitskyR:97}, the unique representation property has been explicitly used to find a variety of performance guarantees for sparse signal processing~\cite{BourguignonCI:07,MohimaniBJ:09,WipfR:04}.
Tang and Nehorai~\cite{TangN:10} also obtain performance guarantees using full spark frames, but they refer to them as \emph{non-degenerate measurement matrices}.

Consider a matrix whose entries are independent continuous random variables.
Intuitively, the matrix is full spark with probability one, and this fact was recently proved in~\cite{BlumensathD:09}.
However, this random process does not allow one to control certain features of the matrix, such as unit-norm tightness or equiangularity.
Indeed, ETFs are notoriously difficult to construct, but they appear to be particularly well-suited for sparse signal processing~\cite{BajwaCM:acha11,FickusM:spie11,MixonQKF:icassp11}.
For example, Bajwa et al.~\cite{BajwaCM:acha11} uses Steiner ETFs to recover the support of $\Psi^* x$ in \eqref{eq.sparse signal processing}; given measurements $y$, the largest entries in the back-projection $F^*y$ often coincide with the support of $\Psi^* x$.
However, Steiner ETFs have particularly small spark~\cite{FickusMT:laa11}, and back-projection will correctly identify the support of $x$ even when the corresponding columns in $F$ are dependent; in this case, there is no way to estimate the nonzero entries of $\Psi^* x$.
This illustrates one reason to build deterministic full spark frames: their submatrices are necessarily invertible, making it possible to estimate these nonzero entries.

For another application of full spark frames, we return to the problem \eqref{eq.reconstruction} of reconstructing a signal from distorted frame coefficients.
Specifically, we consider P\"{u}schel and Kova\v{c}evi\'{c}'s work~\cite{PuschelK:dcc05}, which focuses on an Internet-like channel that is prone to multitudes of erasures.
In this context, they reconstruct the signal after first identifying which frame coefficients were not erased; with this information, the signal can be estimated provided the corresponding frame elements span.
In this sense, full spark frames are \emph{maximally robust to erasures}, as coined in~\cite{PuschelK:dcc05}.
In particular, an $M\times N$ full spark frame is robust to $N-M$ erasures since any $M$ of the frame coefficients will uniquely determine the original signal.

Yet another application of full spark frames is phaseless reconstruction, which can be viewed in terms of a channel, as in \eqref{eq.reconstruction}; in this case, $\mathcal{D}(\cdot)$ is the entrywise absolute value function.
Phaseless reconstruction has a number of real-world applications including speech processing~\cite{BalanCE:acha06}, X-ray crystallography~\cite{CandesSV:arxiv11}, and quantum state estimation~\cite{RenesBSC:jmp04}.
As such, there has been a lot of work to reconstruct an $M$-dimensional vector (up to an overall phase factor) from the magnitudes of its frame coefficients, most of which involves frames in operator space, which inherently require $N=\Omega(M^2)$ measurements~\cite{BalanBCE:jfaa09,RenesBSC:jmp04}.
However, Balan et al.~\cite{BalanCE:acha06} show that if an $M\times N$ real frame $F$ is full spark with $N\geq 2M-1$, then $\mathcal{D}\circ F^*$ is injective, meaning an inversion process is possible with only $N=O(M)$ measurements.
This result prompted an ongoing search for efficient phaseless reconstruction processes~\cite{BalanBCE:spie07,CandesSV:arxiv11}, but no reconstruction process can succeed without a good family of frames, such as full spark frames.

Despite the fact that full spark frames have a multitude of applications, to date, there has not been much progress in constructing deterministic full spark frames, let alone full spark frames with additional desirable properties.
A noteworthy exception is P\"{u}schel and Kova\v{c}evi\'{c}'s work~\cite{PuschelK:dcc05}, in which real full spark tight frames are constructed using polynomial transforms.
In the present paper, we start by investigating Vandermonde frames, harmonic frames, and modifications thereof.
While the use of certain Vandermonde and harmonic frames as full spark frames is not new~\cite{BourguignonCI:07,CandesRT:06,Fuchs:05}, the fruits of our investigation are new:
For instance, we demonstrate that certain classes of ETFs are full spark, and we characterize the $M\times N$ full spark harmonic frames for which $N$ is a prime power.
The remainder of the paper proves two results which might be considered folklore-type observations---while their proofs are new, the results are not surprising.
First, in Section~3, we show that $M\times N$ full spark Parseval frames form a dense subset of the entire collection of $M\times N$ Parseval frames.
As such, full spark frames are abundant, even after imposing the additional condition of tightness.
This result is balanced with Section~4, in which we prove that verifying whether a matrix is full spark is hard for $\NP$ under randomized polynomial-time reductions.
In other words, assuming $\NP\not\subseteq\BPP$ (a computational complexity assumption slightly stronger than $\P\neq\NP$ and nearly as widely believed), then there is no method by which one can efficiently test whether matrices are full spark.
As such, the deterministic constructions in Section~2 are significant in that they guarantee a property which is otherwise difficult to check.

\section{Deterministic constructions of full spark frames}

A square matrix is invertible if and only if its determinant is nonzero, and in our quest for deterministic constructions of full spark frames, this characterization will reign supreme.
One class of matrices has a particularly simple determinant formula: Vandermonde matrices.
Specifically, Vandermonde matrices have the following form:
\begin{equation}
\label{eq.vandermonde}
V
=\begin{bmatrix}
 1 & 1 & \cdots & 1\\
 \alpha_1 & \alpha_2 & \cdots & \alpha_N\\
\vdots & \vdots & \cdots & \vdots\\
 \alpha_1^{M-1} & \alpha_2^{M-1} & \cdots & \alpha_N^{M-1}
 \end{bmatrix},
\end{equation}
and square Vandermonde matrices, i.e., with $N=M$, have the following determinant:
\begin{equation}
\label{eq.vandermonde determinant}
\mathrm{det}(V)=\prod_{1\leq i<j\leq M}(\alpha_j-\alpha_i).
\end{equation}
Consider \eqref{eq.vandermonde} in the case where $N\geq M$.
Since every $M\times M$ submatrix of $V$ is also Vandermonde, we can modify the indices in \eqref{eq.vandermonde determinant} to calculate the determinant of the submatrices.  
These determinants are nonzero precisely when the bases $\{\alpha_{n}\}_{n=1}^N$ are distinct, yielding the following result:

\begin{lem}
\label{lem.vandermonde}
A Vandermonde matrix is full spark if and only if its bases are distinct.
\end{lem}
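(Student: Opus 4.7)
The plan is to prove both directions using the Vandermonde determinant formula \eqref{eq.vandermonde determinant} together with the observation already noted in the text that every $M\times M$ submatrix of $V$ is itself a Vandermonde matrix (on a subset of the original bases).

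For the forward direction, I would argue the contrapositive: suppose two bases coincide, say $\alpha_i=\alpha_j$ with $i\neq j$. Then columns $i$ and $j$ of $V$ are identical, so $\{v_i,v_j\}$ is linearly dependent, giving $\mathrm{Spark}(V)\leq 2$. Since $M\geq 2$ for the notion of full spark to be nontrivial (the $M=1$ case is easily handled separately, as any nonzero row of ones has spark $2=M+1$), this contradicts $V$ being full spark.

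For the reverse direction, assume the bases $\{\alpha_n\}_{n=1}^N$ are pairwise distinct. Pick any $M$-element index set $S=\{n_1<\cdots<n_M\}\subseteq\{1,\ldots,N\}$, and let $V_S$ denote the corresponding $M\times M$ submatrix. Then $V_S$ is Vandermonde with bases $\alpha_{n_1},\ldots,\alpha_{n_M}$, which are still pairwise distinct, so \eqref{eq.vandermonde determinant} yields
\begin{equation*}
\det(V_S)=\prod_{1\leq k<\ell\leq M}(\alpha_{n_\ell}-\alpha_{n_k})\neq 0.
\end{equation*}
Hence every $M\times M$ submatrix of $V$ is invertible, which is exactly the definition of full spark.

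There is no real obstacle here; the proof is essentially a direct application of \eqref{eq.vandermonde determinant} to the submatrices, combined with the elementary observation that duplicated bases produce duplicated columns. The only mild subtlety is making sure to address both the necessity (duplicated bases force dependent columns) and the sufficiency (distinct bases force every $M\times M$ minor to be nonzero), but neither requires machinery beyond the determinant formula already cited.
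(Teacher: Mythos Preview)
Your proposal is correct and follows exactly the argument the paper gives (in the paragraph immediately preceding the lemma): every $M\times M$ submatrix is Vandermonde on a subset of the bases, so \eqref{eq.vandermonde determinant} makes each minor nonzero precisely when the bases are distinct. Your write-up just spells out the two directions more explicitly than the paper does.
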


To be clear, this result is not new.
In fact, the full spark of Vandermonde matrices was first exploited by Fuchs~\cite{Fuchs:05} for sparse signal processing.
Later, Bourguignon et al.~\cite{BourguignonCI:07} specifically used the full spark of Vandermonde matrices whose bases are sampled from the complex unit circle.
Interestingly, when viewed in terms of frame theory, Vandermonde matrices naturally point to the discrete Fourier transform:

\begin{thm}
\label{thm.vandermonde}
The only $M\times N$ Vandermonde matrices that are equal norm and tight have bases in the complex unit circle.  Among these, the frames with the smallest worst-case coherence have bases that are equally spaced in the complex unit circle, provided $N\geq 2M$.
\end{thm}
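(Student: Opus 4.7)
The plan is to prove the two claims in sequence.

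For the first claim, the plan is to combine equal-norm and tightness to pin down $|\alpha_n|$. Tightness gives $VV^* = AI_M$ for some $A > 0$; reading the $(0,0)$-entry yields $A = N$, and taking traces gives $\sum_n \|v_n\|^2 = MN$. Combined with equal-norm this forces $\|v_n\|^2 = M$ for every $n$. Since $\|v_n\|^2 = \sum_{k=0}^{M-1} |\alpha_n|^{2k}$ is a strictly increasing function of $|\alpha_n|^2 \geq 0$ (for $M \geq 2$) that takes the value $M$ precisely at $|\alpha_n|^2 = 1$, one concludes $|\alpha_n| = 1$ for every $n$.

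For the second claim, I would parameterize the bases as $\alpha_n = e^{2\pi i \theta_n}$ and observe that $|\langle v_i,v_j\rangle| = |D_M(\theta_i - \theta_j)|$, where $D_M(\phi) := \sin(\pi M\phi)/\sin(\pi\phi)$ is the Dirichlet-type kernel of order $M$. The plan is to establish matching lower and upper bounds on the worst-case coherence $\mu = \max_{i\neq j}|\langle v_i,v_j\rangle|/M$. For the lower bound, sort the $\theta_n$ around the circle and invoke pigeonhole: some consecutive pair has circular distance at most $1/N$, and so $\mu \geq D_M(1/N)/M$, provided $D_M$ is decreasing on $[0, 1/N]$. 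I would prove that $D_M$ is decreasing on the larger interval $[0, 1/(2M)]$ by noting that on this interval the sign of $D_M'(\phi)$ matches the sign of $M\tan(\pi\phi) - \tan(\pi M\phi)$, which is nonpositive by superadditivity of $\tan$ on $(0,\pi/2)$: iterating $\tan(x+y) \geq \tan(x)+\tan(y)$ yields $\tan(\pi M\phi) \geq M\tan(\pi\phi)$ whenever $\pi M\phi \leq \pi/2$. The hypothesis $N \geq 2M$ is exactly what places $1/N$ inside this monotone regime.

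For the matching upper bound, the equally spaced bases $\alpha_n = e^{2\pi i n/N}$ give inner products $|\langle v_i,v_j\rangle| = |D_M(d/N)|$ with $d = (i-j) \bmod N$, so it remains to prove $\max_{1 \leq d \leq N-1}|D_M(d/N)| = D_M(1/N)$. Using the symmetry $d \leftrightarrow N-d$ to restrict to $d/N \in [1/N, 1/2]$, monotonicity from the previous step handles the sub-interval $[1/N, 1/(2M)]$, while on $[1/(2M), 1/2]$ the crude bound $|D_M(\phi)| \leq 1/\sin(\pi\phi) \leq 1/\sin(\pi/(2M))$ applies, since $\sin(\pi\phi)$ attains its minimum on this sub-interval at the left endpoint. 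The hard part will be to verify that the principal value dominates this sidelobe bound, i.e.\ $D_M(1/N) \geq 1/\sin(\pi/(2M))$, or equivalently $\sin(\pi M/N)\sin(\pi/(2M)) \geq \sin(\pi/N)$. I expect to prove this inequality by noting that $g(a) := \sin(Ma)\sin(\pi/(2M)) - \sin(a)$ vanishes at both $a = 0$ and $a = \pi/(2M)$ and is concave on that interval (which can be checked using the elementary bound $\sin(x) \geq (2/\pi)x$ on $[0,\pi/2]$), hence $g \geq 0$ throughout, and then applying this with $a = \pi/N \leq \pi/(2M)$. With this concavity-based trigonometric inequality in hand, the equally spaced configuration achieves the pigeonhole lower bound on $\mu$, proving that it is optimal.
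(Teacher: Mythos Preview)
Your plan is correct and structurally matches the paper's proof: both reduce the second claim to two facts about the Dirichlet-type kernel, namely (i) strict monotonicity of $|D_M|$ on $(0,\tfrac{1}{2M})$ and (ii) the sidelobe bound $|D_M(x)|\le D_M(\tfrac{1}{2M})$ on $(\tfrac{1}{2M},1-\tfrac{1}{2M})$, combined with the pigeonhole observation that the minimum gap is at most $1/N\le 1/(2M)$.

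There is one unnecessary detour. Your ``hard part,'' the inequality $D_M(1/N)\ge 1/\sin(\pi/(2M))$, is already a consequence of the monotonicity you established: since $\sin(\pi M\cdot\tfrac{1}{2M})=1$, one has $1/\sin(\pi/(2M))=D_M(\tfrac{1}{2M})$, so the claim is simply $D_M(1/N)\ge D_M(1/(2M))$, which holds because $1/N\le 1/(2M)$ and $D_M$ is decreasing on that interval. The paper sees this directly (it compares $g(x)$ to $g(\tfrac{1}{2M})$ throughout), so no separate trigonometric inequality is needed; your concavity argument works but is redundant.

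On execution you differ from the paper in two places, both to your advantage. For the first claim the paper argues case-by-case (handling a possible zero base, then comparing the norms of the first two rows to pin down $|\alpha_n|$); your trace argument that forces $\|v_n\|^2=M$ and then inverts the strictly increasing map $t\mapsto\sum_{k=0}^{M-1}t^k$ is cleaner. For monotonicity the paper differentiates the numerator $M\pi\sin(\pi x)\cos(M\pi x)-\pi\sin(M\pi x)\cos(\pi x)$ a second time and checks the sign; your reduction to $M\tan(\pi\phi)\le\tan(M\pi\phi)$ via the superadditivity $\tan(x+y)\ge\tan x+\tan y$ on $(0,\pi/2)$ is an equally valid and arguably more transparent route.
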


\begin{proof}
Suppose a Vandermonde matrix is equal norm and tight.  
Note that a zero base will produce the zeroth identity basis element $\delta_0$.
Letting $\mathcal{P}$ denote the indices of the nonzero bases, the fact that the matrix is full rank implies $|\mathcal{P}|\geq M-1$.
Also, equal norm gives that the frame element length
\[ \|f_n\|^2=\sum_{m=0}^{M-1}|f_n[m]|^2=\sum_{m=0}^{M-1}|\alpha_n^m|^2=\sum_{m=0}^{M-1}|\alpha_n|^{2m} \]
is constant over $n\in\mathcal{P}$.  
Since $\sum_{m=0}^{M-1}x^{2m}$ is strictly increasing over $0<x<\infty$, there exists $c>0$ such that $|\alpha_n|^2=c$ for all $n\in\mathcal{P}$.   
Next, tightness gives that the rows have equal norm, implying that the first two rows have equal norm, i.e., $|\mathcal{P}|c=|\mathcal{P}|c^2$.  
Thus $c=1$, and so the nonzero bases are in the complex unit circle.  
Furthermore, since the zeroth and first rows have equal norm by tightness, we have $|\mathcal{P}|=N$, and so every base is in the complex unit circle.

Now consider the inner product between Vandermonde frame elements whose bases $\{e^{2\pi i x_n}\}_{n=1}^N$ come from the complex unit circle:
\[ \langle f_n,f_{n'}\rangle=\sum_{m=0}^{M-1}(e^{2\pi i x_n})^m\overline{(e^{2\pi i x_{n'}})^m}=\sum_{m=0}^{M-1}e^{2\pi i (x_n-x_{n'})m}. \]
We will show that the worst-case coherence comes from the two closest bases.  
Consider the following function:
\begin{equation}
\label{eq.g function}
g(x)
:=\bigg|\sum_{m=0}^{M-1}e^{2\pi ixm}\bigg|^2.
\end{equation}
Figure~\ref{figure} gives a plot of this function in the case where $M=5$.
We will prove two things about this function:
\begin{itemize}
\item[(i)] $\tfrac{d}{dx}g(x)<0$ for every $x\in(0,\tfrac{1}{2M})$,
\item[(ii)] $g(x)\leq g(\tfrac{1}{2M})$ for every $x\in(\tfrac{1}{2M},1-\tfrac{1}{2M})$.
\end{itemize}

\begin{figure}[t]
\centering
\includegraphics[width=0.5\textwidth]{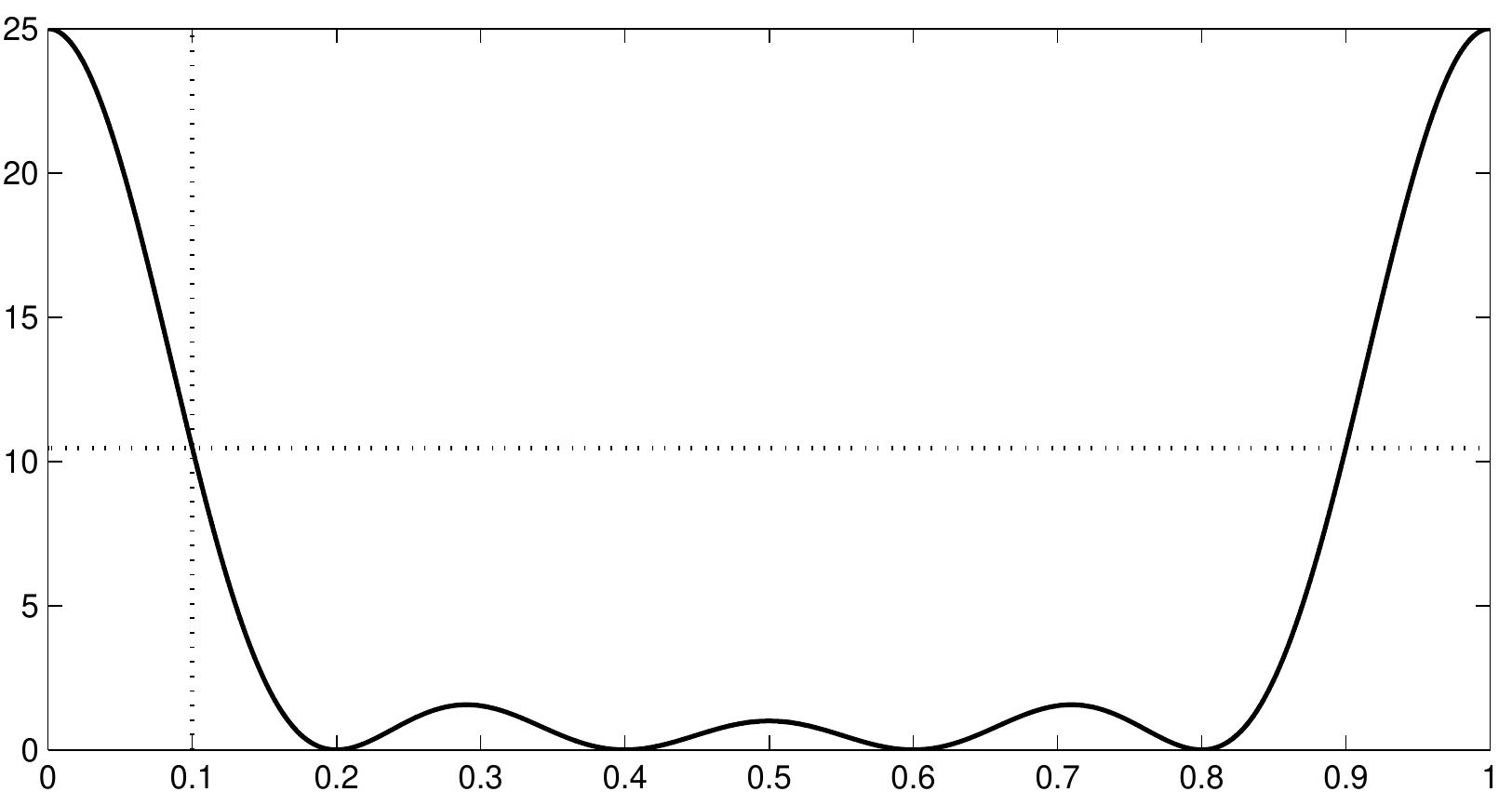}
\caption{Plot of $g$ defined by \eqref{eq.g function} in the case where $M=5$.  Observe (i) that $g$ is strictly decreasing on the interval $(0,\frac{1}{10})$, and (ii) that $g(x)\leq g(\frac{1}{10})$ for every $x\in(\frac{1}{10},\frac{9}{10})$.  As established in the proof of Theorem~\ref{thm.vandermonde}, $g$ behaves in this manner for general values of $M$.
\label{figure}}
\end{figure}

First, we claim that (i) and (ii) are sufficient to prove our result.
To establish this, we first show that the two closest bases $e^{2\pi ix_{n'}}$ and $e^{2\pi ix_{n''}}$ satisfy $|x_{n'}-x_{n''}|\leq\frac{1}{2M}$.
Without loss of generality, the $n$'s are ordered in such a way that $\{x_n\}_{n=0}^{N-1}\subseteq[0,1)$ are nondecreasing.
Define
\begin{equation*}
d(x_n,x_{n+1})
:=\left\{\begin{array}{ll}
x_{n+1}-x_n,&n=0,\ldots,N-2\\ 
x_0-(x_{N-1}-1),&n=N-1,
\end{array}
\right.
\end{equation*}
and let $n'$ be the $n$ which minimizes $d(x_n,x_{n+1})$.
Since the minimum is less than the average, we have
\begin{equation}
\label{eq.average inequality}
d(x_{n'},x_{n'+1})\leq\frac{1}{N}\bigg((x_0-(x_{N-1}-1))+\sum_{n=0}^{N-1}(x_{n+1}-x_n)\bigg)=\frac{1}{N}\leq\frac{1}{2M},
\end{equation}
provided $N\geq2M$.
Note that if we view $\{x_n\}_{n\in\mathbb{Z}_N}$ as members of $\mathbb{R}/\mathbb{Z}$, then $d(x_n,x_{n+1})=x_{n+1}-x_n$.
Since $g(x)$ is even, then (i) implies that $|\langle f_{n'+1},f_{n'}\rangle|^2=g(x_{n'+1}-x_{n'})$ is larger than any other $g(x_p-x_{p'})=|\langle f_p,f_{p'}\rangle|^2$ in which $x_p-x_{p'}\in[0,\tfrac{1}{2M}]\cup[1-\tfrac{1}{2M},1)$.
Next, \eqref{eq.average inequality} and (ii) together imply that $|\langle f_{n'+1},f_{n'}\rangle|^2=g(x_{n'+1}-x_{n'})\geq g(\tfrac{1}{2M})$ is larger than any other $g(x_p-x_{p'})=|\langle f_p,f_{p'}\rangle|^2$ in which $x_p-x_{p'}\in(\tfrac{1}{2M},1-\tfrac{1}{2M})$, provided $N\geq2M$.
Combined, (i) and (ii) give that $|\langle f_{n'+1},f_{n'}\rangle|$ achieves the worst-case coherence of $\{f_n\}_{n\in\mathbb{Z}_N}$.
Additionally, (i) gives that the worst-case coherence $|\langle f_{n'+1},f_{n'}\rangle|$ is minimized when $x_{n'+1}-x_{n'}$ is maximized, i.e., when the $x_n$'s are equally spaced in the unit interval.

To prove (i), note that the geometric sum formula gives
\begin{equation}
\label{eq.g of x}
g(x)
=\bigg|\sum_{m=0}^{M-1}e^{2\pi ixm}\bigg|^2
=\bigg|\frac{e^{2M\pi ix}-1}{e^{2\pi ix}-1}\bigg|^2
=\frac{2-2\cos(2M\pi x)}{2-2\cos(2\pi x)}
=\bigg(\frac{\sin(M\pi x)}{\sin(\pi x)}\bigg)^2,
\end{equation}
where the final expression uses the identity $1-\cos(2z)=2\sin^2 z$.
To show that $g$ is decreasing over $(0,\frac{1}{2M})$, note that the base of \eqref{eq.g of x} is positive on this interval, and performing the quotient rule to calculate its derivative will produce a fraction whose denominator is nonnegative and whose numerator is given by
\begin{equation}
\label{eq.g of x 2}
M\pi\sin(\pi x)\cos(M\pi x)-\pi\sin(M\pi x)\cos(\pi x).
\end{equation}
This factor is zero at $x=0$ and has derivative:
\begin{equation*}
-(M^2-1)\pi^2\sin(\pi x)\sin(M\pi x),
\end{equation*}
which is strictly negative for all $x\in(0,\tfrac{1}{2M})$.
Hence, \eqref{eq.g of x 2} is strictly negative whenever $x\in(0,\tfrac{1}{2M})$, and so $g'(x)<0$ for every $x\in(0,\tfrac{1}{2M})$.

For (ii), note that for every $x\in(\frac{1}{2M},1-\frac{1}{2M})$, we can individually bound the numerator and denominator of what the geometric sum formula gives:
\begin{equation*}
g(x)
=\bigg|\sum_{m=0}^{M-1}e^{2\pi ixm}\bigg|^2
=\frac{|e^{2M\pi ix}-1|^2}{|e^{2\pi ix}-1|^2}
\leq\frac{|e^{\pi i}-1|^2}{|e^{\pi i/M}-1|^2}
=\bigg|\sum_{m=0}^{M-1}e^{\pi im/M}\bigg|^2
=g(\tfrac{1}{2M}).\qedhere
\end{equation*}
\end{proof}

Consider the $N\times N$ discrete Fourier transform (DFT) matrix, scaled to have entries of unit modulus:
\begin{equation*}
\begin{bmatrix}
   1 & 1 & 1 & \cdots & 1\\
   1 & \omega & \omega^2 & \cdots & \omega^{N-1}\\
   1 & \omega^2 & \omega^4 & \cdots & \omega^{2(N-1)}\\
\vdots & \vdots & \vdots & \cdots & \vdots\\
   1 & \omega^{N-1} & \omega^{2(N-1)} & \cdots & \omega^{(N-1)(N-1)}
  \end{bmatrix},
\end{equation*}
where $\omega=e^{-2\pi i/N}$.
The first $M$ rows of the DFT form a Vandermonde matrix of distinct bases $\{\omega^n\}_{n=0}^{N-1}$; as such, this matrix is full spark by Lemma~\ref{lem.vandermonde}.
In fact, the previous result says that this is in some sense an optimal Vandermonde frame, but this might not be the best way to pick rows from a DFT.
Indeed, several choices of DFT rows could produce full spark frames, some with smaller coherence or other desirable properties, and so the remainder of this section focuses on full spark DFT submatrices.
First, we note that not every DFT submatrix is full spark.
For example, consider the $4\times 4$ DFT:
\begin{equation*}
\begin{bmatrix}
1&1&1&1\\
1&-i&-1&i\\
1&-1&1&-1\\
1&i&-1&-i
\end{bmatrix}.
\end{equation*}
Certainly, the zeroth and second rows of this matrix are not full spark, since the zeroth and second columns of this submatrix form the all-ones matrix, which is not invertible.
So what can be said about the set of permissible row choices?
The following result gives some necessary conditions on this set:

\begin{thm}
\label{thm.closure}
Take an $N\times N$ discrete Fourier transform matrix, and select the rows indexed by $\mathcal{M}\subseteq\mathbb{Z}_N$ to build the matrix $F$.
If $F$ is full spark, then so is the matrix built from rows indexed by
\begin{itemize}
\item[(i)] any translation of $\mathcal{M}$,
\item[(ii)] any $A\mathcal{M}$ with $A$ relatively prime to $N$,
\item[(iii)] the complement of $\mathcal{M}$ in $\mathbb{Z}_N$.
\end{itemize}
\end{thm}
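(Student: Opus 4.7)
The plan is to verify each of (i), (ii), (iii) by exhibiting an algebraic relationship between the $|\mathcal{M}| \times |\mathcal{M}|$ submatrices of $F$ and the corresponding submatrices built from the new row selection, designed so that invertibility transfers.

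For (i), translating $\mathcal{M}$ by $t$ replaces $\omega^{mn}$ with $\omega^{(m+t)n} = \omega^{tn}\omega^{mn}$, a multiplicative factor that depends only on the column. Thus, for any column set $\mathcal{K} \subseteq \mathbb{Z}_N$ with $|\mathcal{K}| = |\mathcal{M}|$, the corresponding submatrix of the translated frame differs from the $\mathcal{M} \times \mathcal{K}$ submatrix of $F$ by the diagonal matrix $\mathrm{diag}(\omega^{tn})_{n \in \mathcal{K}}$, which has nonzero determinant. Invertibility is preserved column-set by column-set, and hence so is full spark.

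For (ii), since $\gcd(A, N) = 1$ the map $n \mapsto An \pmod{N}$ is a bijection of $\mathbb{Z}_N$, and the identity $\omega^{(Am)n} = \omega^{m(An)}$ shows that the matrix built from rows $A\mathcal{M}$ equals $F$ with its columns permuted by this bijection. Column permutation sends invertible submatrices to invertible submatrices, so full spark is preserved.

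The main obstacle is (iii), because $\mathcal{M}$ and $\mathcal{M}^c$ have different sizes and no direct column manipulation relates their submatrices. The plan is to pass to the unitary normalization $U = \tfrac{1}{\sqrt{N}} W$ of the DFT (scaling does not affect which submatrices are invertible) and to invoke Jacobi's complementary minor identity: for any invertible square matrix and any equal-size index sets $\mathcal{R}, \mathcal{C}$, the minor $\det U[\mathcal{R}, \mathcal{C}]$ equals, up to sign and the global factor $\det U$, the minor of $U^{-1}$ on the complementary sets $\mathcal{C}^c, \mathcal{R}^c$. Since $U$ is unitary, $U^{-1} = U^*$, and the minor of $U^*$ on complementary indices is the complex conjugate of $\det U[\mathcal{R}^c, \mathcal{C}^c]$. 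This yields
$$\det U[\mathcal{R}, \mathcal{C}] = \pm \det U \cdot \overline{\det U[\mathcal{R}^c, \mathcal{C}^c]},$$
and since $\det U \neq 0$ the two minors vanish simultaneously. Taking $\mathcal{R} = \mathcal{M}$ and letting $\mathcal{C}$ range over all size-$|\mathcal{M}|$ subsets of $\mathbb{Z}_N$, the complement $\mathcal{C}^c$ ranges over all size-$|\mathcal{M}^c|$ subsets, proving that every $|\mathcal{M}| \times |\mathcal{M}|$ submatrix selected from rows $\mathcal{M}$ is invertible iff every $|\mathcal{M}^c| \times |\mathcal{M}^c|$ submatrix selected from rows $\mathcal{M}^c$ is invertible. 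The delicate step is deploying Jacobi's identity in a form that makes the unitarity hypothesis usable; since we only care whether a determinant vanishes, the precise sign is immaterial.
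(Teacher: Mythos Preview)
Your arguments for (i) and (ii) are essentially identical to the paper's: both exploit that translation corresponds to right-multiplication by a diagonal unitary and that dilation by a unit of $\mathbb{Z}_N$ corresponds to a column permutation.

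For (iii), however, you take a genuinely different route. The paper argues by contraposition using the tight-frame identity $F^*F + G^*G = NI_N$ (where $G$ is built from the rows indexed by $\mathcal{M}^\mathrm{c}$): starting from a nontrivial dependence among $N-M$ columns of $G$, it manufactures a nonzero vector in $\mathrm{Range}(F^*F)$ orthogonal to the span of the complementary $M$ columns of $F^*F$, forcing the corresponding $M\times M$ submatrix of $F$ to be rank-deficient. Your approach instead normalizes to the unitary $U=\tfrac{1}{\sqrt{N}}W$ and invokes Jacobi's complementary minor identity, which together with $U^{-1}=U^*$ yields $\det U[\mathcal{M},\mathcal{C}] = \pm(\det U)\,\overline{\det U[\mathcal{M}^\mathrm{c},\mathcal{C}^\mathrm{c}]}$, so the two minors vanish simultaneously.

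Both arguments have the same scope: the paper's proof extends verbatim to any tight frame and its Naimark complement (as the authors remark), while your Jacobi argument applies to the rows of any unitary matrix and their complementary rows---and these are the same setting, since Naimark's theorem identifies Parseval frames with row-submatrices of unitaries. What your approach buys is brevity and an explicit determinantal relation (not just a rank comparison), at the cost of importing a classical identity that may be less familiar to some readers. The paper's approach is more self-contained and makes the connection to Naimark complementation transparent without any determinant machinery.
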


\begin{proof}
For (i), we first define $D$ to be the $N\times N$ diagonal matrix whose diagonal entries are $\{\omega^n\}_{n=0}^{N-1}$.
Note that, since $\omega^{(m+1)n}=\omega^n\omega^{mn}$, translating the row indices $\mathcal{M}$ by $1$ corresponds to multiplying $F$ on the right by $D$.
For some set $\mathcal{K}\subseteq\mathbb{Z}_N$ of size $M:=|\mathcal{M}|$, let $F_\mathcal{K}$ denote the $M\times M$ submatrix of $F$ whose columns are indexed by $\mathcal{K}$, and let $D_\mathcal{K}$ denote the $M\times M$ diagonal submatrix of $D$ whose diagonal entries are indexed by $\mathcal{K}$.
Then since $D_\mathcal{K}$ is unitary, we have
\begin{equation*}
|\mathrm{det}((FD)_\mathcal{K})|
=|\mathrm{det}(F_\mathcal{K}D_\mathcal{K})|
=|\mathrm{det}(F_\mathcal{K})||\mathrm{det}(D_\mathcal{K})|
=|\mathrm{det}(F_\mathcal{K})|.
\end{equation*}
Thus, if $F$ is full spark, $|\mathrm{det}((FD)_\mathcal{K})|=|\mathrm{det}(F_\mathcal{K})|>0$, and so $FD$ is also full spark.
Using this fact inductively proves (i) for all translations of $\mathcal{M}$.

For (ii), let $G$ denote the submatrix of rows indexed by $A\mathcal{M}$.
Then for any set $\mathcal{K}\subseteq\mathbb{Z}_N$ of size $M$,
\begin{equation*}
\mathrm{det}(G_\mathcal{K})
=\mathrm{det}(\omega^{(Am)k})_{m\in\mathcal{M},k\in\mathcal{K}}
=\mathrm{det}(\omega^{m(Ak)})_{m\in\mathcal{M},k\in\mathcal{K}}
=\mathrm{det}(F_{A\mathcal{K}}).
\end{equation*}
Since $A$ is relatively prime to $N$, multiplication by $A$ permutes the elements of $\mathbb{Z}_N$, and so $A\mathcal{K}$ has exactly $M$ distinct elements.
Thus, if $F$ is full spark, then $\mathrm{det}(G_\mathcal{K})=\mathrm{det}(F_{A\mathcal{K}})\neq0$, and so $G$ is also full spark.

For (iii), we let $G$ be the $(N-M)\times N$ submatrix of rows indexed by $\mathcal{M}^\mathrm{c}$, so that
\begin{equation}
\label{eq.complements}
NI_N
=\begin{bmatrix}F^* & G^*\end{bmatrix}\begin{bmatrix}F\\G\end{bmatrix}
=F^*F+G^*G.
\end{equation}
We will use contraposition to show that $F$ being full spark implies that $G$ is also full spark.
To this end, suppose $G$ is not full spark.
Then $G$ has a collection of $N-M$ linearly dependent columns $\{g_i\}_{i\in \mathcal{K}}$, and so there exists a nontrivial sequence $\{\alpha_i\}_{i\in \mathcal{K}}$ such that 
\begin{equation*}
\sum_{i\in \mathcal{K}}\alpha_ig_i
=0.
\end{equation*}
Considering $g_i=G\delta_i$, where $\delta_i$ is the $i$th identity basis element, we can use \eqref{eq.complements} to express this linear dependence in terms of $F$:
\begin{equation*}
0
=G^*0
=G^*\sum_{i\in \mathcal{K}}\alpha_ig_i
=\sum_{i\in \mathcal{K}}\alpha_iG^*G\delta_i
=\sum_{i\in \mathcal{K}}\alpha_i(NI_N-F^*F)\delta_i.
\end{equation*}
Rearranging then gives
\begin{equation}
\label{eq.x defn}
x:=N\sum_{i\in \mathcal{K}}\alpha_i\delta_i=\sum_{i\in \mathcal{K}}\alpha_iF^*F\delta_i.
\end{equation}
Here, we note that $x$ is nonzero since $\{\alpha_i\}_{i\in \mathcal{K}}$ is nontrivial, and that $x\in\mathrm{Range}(F^*F)$.
Furthermore, whenever $j\not\in \mathcal{K}$, we have from \eqref{eq.x defn} that
\begin{equation*}
\langle x,F^*F\delta_j\rangle
=\langle F^*Fx,\delta_j\rangle
=N\bigg\langle F^*F\sum_{i\in \mathcal{K}}\alpha_i\delta_i,\delta_j\bigg\rangle
=N^2\bigg\langle\sum_{i\in \mathcal{K}}\alpha_i\delta_i,\delta_j\bigg\rangle
=0,
\end{equation*}
and so $x\perp\mathrm{Span}\{F^*F\delta_j\}_{j\in \mathcal{K}^\mathrm{c}}$.
Thus, the containment $\mathrm{Span}\{F^*F\delta_j\}_{j\in \mathcal{K}^\mathrm{c}}\subseteq\mathrm{Range}(F^*F)$ is proper, and so
\begin{equation*}
M
=\mathrm{Rank}(F)
=\mathrm{Rank}(F^*F)
>\mathrm{Rank}(F^*F_{\mathcal{K}^\mathrm{c}})
=\mathrm{Rank}(F_{\mathcal{K}^\mathrm{c}}).
\end{equation*}
Since the $M\times M$ submatrix $F_{\mathcal{K}^\mathrm{c}}$ is rank-deficient, it is not invertible, and therefore $F$ is not full spark.
\end{proof}

We note that our proof of (iii) above uses techniques from Cahill et al.~\cite{CahillCH:sampta11}, and can be easily generalized to prove that the Naimark complement of a full spark tight frame is also full spark.
Theorem~\ref{thm.closure} tells us quite a bit about the set of permissible choices for DFT rows.
For example, not only can we pick the first $M$ rows of the DFT to produce a full spark Vandermonde frame, but we can also pick any consecutive $M$ rows, by Theorem~\ref{thm.closure}(i).
We would like to completely characterize the choices that produce full spark harmonic frames.
The following classical result does this in the case where $N$ is prime:

\begin{thm}[Chebotar\"{e}v, see~\cite{StevenhagenL:mi96}]
Let $N$ be prime.
Then every square submatrix of the $N\times N$ discrete Fourier transform matrix is invertible.
\end{thm}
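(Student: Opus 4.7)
The plan is to use $p$-adic methods, where $p := N$. Let $v$ denote the discrete valuation on the cyclotomic field $\mathbb{Q}(\omega)$ attached to the prime ideal $(1-\omega) \subset \mathbb{Z}[\omega]$, normalized so that $v(1-\omega) = 1$; then $v(p) = p-1$ (from the factorization $p = \prod_{k=1}^{p-1}(1-\omega^k)$), and $v(\omega^k - 1) = 1$ for every $k \not\equiv 0 \pmod{p}$, since $(1-\omega^k)/(1-\omega) = 1 + \omega + \cdots + \omega^{k-1}$ reduces to $k \in \mathbb{F}_p^\times$ modulo $(1-\omega)$, hence is a unit. Given an arbitrary $n\times n$ submatrix $A = (\omega^{m_i k_j})_{i,j=1}^n$ with rows $\mathcal{M} = \{m_1, \ldots, m_n\}$ and columns $\mathcal{K} = \{k_1, \ldots, k_n\} \subseteq \mathbb{Z}_p$, the aim is to compute $v(\det A)$ and show that it equals a specific finite quantity.

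First, invoke Theorem~\ref{thm.closure}(i) to translate $\mathcal{M}$ and assume $0 \notin \mathcal{M}$ (the edge case $|\mathcal{M}| = p$ is immediate). Setting $y_i := \omega^{m_i} - 1$, each $y_i$ satisfies $v(y_i) = 1$. Expanding by the binomial theorem,
\[
\omega^{m_i k_j} = (1+y_i)^{k_j} = \sum_{r \geq 0} \binom{k_j}{r} y_i^r,
\]
realizes $A$ as a matrix product $YC$ with $Y_{ir} = y_i^r$ and $C_{rj} = \binom{k_j}{r}$. The Cauchy--Binet identity then gives the expansion
\[
\det A = \sum_R \det\bigl((y_i^r)_{i,\, r \in R}\bigr) \cdot \det\bigl((\tbinom{k_j}{r})_{r \in R,\, j}\bigr),
\]
where $R$ ranges over $n$-element subsets of $\mathbb{Z}_{\geq 0}$; only finitely many $R$ contribute, since $\binom{k_j}{r}$ vanishes once $r$ exceeds $\max \mathcal{K}$.

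Next, I bound each term. The second factor is an integer, so $v \geq 0$. The first factor is a signed sum of monomials $\prod_i y_i^{r_{\sigma(i)}}$ each of valuation $\sum_{r \in R} r$, so the ultrametric inequality yields $v \geq \sum_{r \in R} r$. Over $n$-element subsets $R \subseteq \mathbb{Z}_{\geq 0}$, this sum is uniquely minimized at $R_0 := \{0, 1, \ldots, n-1\}$, where it equals $\binom{n}{2}$, so every term with $R \neq R_0$ contributes strictly more than $\binom{n}{2}$.

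It remains to evaluate the $R_0$ term precisely. There, $\det((y_i^r)_{i,\, r \in R_0}) = \prod_{i<i'}(y_{i'} - y_i) = \prod_{i<i'}(\omega^{m_{i'}} - \omega^{m_i})$ is a Vandermonde whose $\binom{n}{2}$ factors each have valuation $1$ (using distinctness of the $m_i$ mod $p$), so its valuation is exactly $\binom{n}{2}$. The second factor reduces, by pulling $1/r!$ out of each row $r$ and then row-reducing falling factorials to pure powers of $k_j$, to $\prod_{j<j'}(k_{j'} - k_j) / \prod_{r=0}^{n-1} r!$; both the numerator and the denominator are products of integers of absolute value at most $p-1$ (using $n \leq p$), hence both coprime to $p$, so this factor has $v = 0$. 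Combining, the $R_0$ term has $v = \binom{n}{2}$ while all other terms have strictly larger valuation, and the strict ultrametric inequality forces $v(\det A) = \binom{n}{2}$, so $\det A \neq 0$. The principal obstacle is the delicate valuation bookkeeping in this expansion---in particular, obtaining the explicit closed form for $\det((\binom{k_j}{r})_{r \in R_0,\, j})$ and verifying that no hidden factor of $p$ sneaks into either its numerator or denominator; this is precisely where the primality of $N$ is indispensable.
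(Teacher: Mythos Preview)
Your proof is correct. The paper does not prove Chebotar\"{e}v's theorem directly---it is cited as a classical result---but the paper does prove the prime-power generalization (Theorem~\ref{thm.uniformly distributed}) via Lemmas~\ref{lem.multiple of p} and~\ref{lem.big fraction}, and that argument specializes to a proof of Chebotar\"{e}v. Your approach is genuinely different from that one.

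The paper's route, following Tao, treats the determinant as a polynomial $D(z_1,\ldots,z_M)$ in the column variables, factors out the Vandermonde $\prod_{i<j}(z_j-z_i)$ to isolate a cofactor $P$, and then evaluates $P(1,\ldots,1)$ by applying the differential operators $(z_j\partial_{z_j})^{j-1}$; one finds $P(1,\ldots,1)=\prod_{i<j}(m_j-m_i)\big/\prod_m m!$, and primality of $N$ makes this an integer coprime to $p$, so Lemma~\ref{lem.multiple of p} (via the cyclotomic minimal polynomial) forces $D$ not to vanish at roots of unity. Your route instead works entirely inside $\mathbb{Z}[\omega]$ with the $(1-\omega)$-adic valuation: the substitution $y_i=\omega^{m_i}-1$ and the binomial expansion convert the submatrix into a product to which Cauchy--Binet applies, and the unique minimum-valuation term is isolated by the strict ultrametric inequality. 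Both arguments ultimately rest on the same arithmetic fact---that the relevant differences and factorials stay below $p$---but they package it differently.

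Each approach buys something. Yours yields the sharper conclusion $v(\det A)=\binom{n}{2}$ exactly, not merely $\det A\neq 0$, and is arguably more conceptual once one is comfortable with valuations. The paper's differentiation argument, on the other hand, stays over $\mathbb{Z}$ and reduces everything to a single divisibility check on the quantity \eqref{eq.big fraction}; this is what allows the clean extension to prime powers in Theorem~\ref{thm.uniformly distributed}, where one counts $p$-divisors in numerator and denominator of \eqref{eq.big fraction} separately. Your Cauchy--Binet expansion would require more care in the prime-power case, since $v(y_i)$ is no longer constant across $i$ and the dominant $R_0$ term no longer separates so cleanly from the rest.

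One small remark: your invocation of Theorem~\ref{thm.closure}(i) is legitimate but slightly indirect, since that theorem is phrased as a one-way implication for full spark. What you actually use is the stronger fact, visible in its proof, that translating $\mathcal{M}$ multiplies each submatrix determinant by a root of unity; you could simply state this directly.
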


As an immediate consequence of Chebotar\"{e}v's theorem, every choice of rows from the DFT produces a full spark harmonic frame, provided $N$ is prime.
This application of Chebotar\"{e}v's theorem was first used by Cand\`{e}s et al.~\cite{CandesRT:06} for sparse signal processing.
Note that each of these frames are equal-norm and tight by construction.
Harmonic frames can also be designed to have minimal coherence; Xia et al.~\cite{XiaZG:it05} produces harmonic equiangular tight frames by selecting row indices which form a difference set in $\mathbb{Z}_N$.
Interestingly, most known families of difference sets in $\mathbb{Z}_N$ require $N$ to be prime~\cite{JungnickelPS:hcd07}, and so the corresponding harmonic equiangular tight frames are guaranteed to be full spark by Chebotar\"{e}v's theorem.
In the following, we use Chebotar\"{e}v's theorem to demonstrate full spark for a class of frames which contains harmonic frames, namely, frames which arise from concatenating harmonic frames with any number of identity basis elements:

\begin{thm}[{cf.~\cite[Theorem 1.1]{Tao:mrl05}}]
\label{thm.harmonic plus identity}
Let $N$ be prime, and pick any $M\leq N$ rows of the $N\times N$ discrete Fourier transform matrix to form the harmonic frame $H$.
Next, pick any $K\leq M$, and take $D$ to be the $M\times M$ diagonal matrix whose first $K$ diagonal entries are $\sqrt{\frac{N+K-M}{MN}}$, and whose remaining $M-K$ entries are $\sqrt{\frac{N+K}{MN}}$.
Then concatenating $DH$ with the first $K$ identity basis elements produces an $M\times(N+K)$ full spark unit norm tight frame.
\end{thm}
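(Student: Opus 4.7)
The plan is to verify the three required properties---unit norm, tightness, and full spark---in that order, with the last invoking Chebotar\"ev's theorem. The normalization and tightness calculations should pose no difficulty; the genuine content of the theorem is that grafting the first $K$ identity basis vectors onto the scaled harmonic frame preserves full spark, and this is where I expect the main obstacle to lie.

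First, since every entry of $H$ has unit modulus, the squared norm of each column of $DH$ equals $\sum_{m=1}^{M}D_{mm}^{2}=K\cdot\tfrac{N+K-M}{MN}+(M-K)\cdot\tfrac{N+K}{MN}=1$, and the identity basis elements already have unit norm, so $F$ is equal norm. For tightness, the rows of $H$ are rows of the $N\times N$ DFT, hence mutually orthogonal with squared norm $N$, so $HH^{*}=NI_{M}$ and therefore $DHH^{*}D=ND^{2}$. Adding the contribution $\sum_{k=1}^{K}e_{k}e_{k}^{*}$ from the appended identity columns yields a diagonal matrix whose first $K$ entries equal $N\cdot\tfrac{N+K-M}{MN}+1=\tfrac{N+K}{M}$ and whose remaining entries equal $N\cdot\tfrac{N+K}{MN}=\tfrac{N+K}{M}$, so $FF^{*}=\tfrac{N+K}{M}I_{M}$ and $F$ is tight.

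For full spark, I would fix an arbitrary $M\times M$ submatrix of $F$ consisting of $L$ identity columns $e_{i_{1}},\ldots,e_{i_{L}}$ with $\{i_{1},\ldots,i_{L}\}\subseteq\{1,\ldots,K\}$ together with $M-L$ columns of $DH$ indexed by some $\mathcal{K}\subseteq\mathbb{Z}_{N}$. Permuting rows so that $i_{1},\ldots,i_{L}$ come first puts the submatrix in block-triangular form, so up to sign its determinant equals that of the $(M-L)\times(M-L)$ matrix obtained from $DH$ by striking rows $i_{1},\ldots,i_{L}$ and keeping only the columns indexed by $\mathcal{K}$. This matrix factors as $D'H'$, where $D'$ is the restriction of $D$ to the surviving rows (still diagonal with strictly positive entries, hence invertible) and $H'$ is an $(M-L)\times(M-L)$ submatrix of the original $N\times N$ DFT. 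Since $N$ is prime, Chebotar\"ev's theorem forces $\det(H')\neq0$, and combined with $\det(D')\neq0$ this shows every $M\times M$ submatrix of $F$ is invertible. The only subtle bookkeeping is the reduction from the $M\times M$ determinant to a submatrix of $DH$ via expansion along the identity columns, but this is standard block-triangular reasoning and should not cause trouble.
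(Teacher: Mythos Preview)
Your proof is correct and follows essentially the same approach as the paper's: the unit-norm and tightness verifications are identical, and for full spark you both reduce an arbitrary $M\times M$ submatrix to block-triangular form via a row permutation, so that the determinant is (up to sign) that of an $(M-L)\times(M-L)$ block of the form $D'H'$, which is nonzero by Chebotar\"ev's theorem. The only cosmetic difference is that the paper treats the cases $L=0$, $L=M$, and $0<L<M$ separately, whereas you handle them uniformly.
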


As an example, when $N=5$ and $K=1$, we can pick $M=3$ rows of the $5\times 5$ DFT which are indexed by $\{0,1,4\}$.  
In this case, $D$ makes the entries of the first DFT row have size $\sqrt{\frac{1}{5}}$ and the entries of the remaining rows have size $\sqrt{\frac{2}{5}}$.
Concatenating with the first identity basis element then produces an equiangular tight frame which is full spark:
\begin{equation}
\label{eq.full spark etf example}
F
=\left[\begin{array}{llllll} 
\sqrt{\frac{1}{5}}\quad\quad\quad&\sqrt{\frac{1}{5}}\quad\quad\quad&\sqrt{\frac{1}{5}}&\sqrt{\frac{1}{5}}\quad\quad\quad&\sqrt{\frac{1}{5}}\quad\quad\quad&1\\
\sqrt{\frac{2}{5}}&\sqrt{\frac{2}{5}}e^{-2\pi\mathrm{i}/5}&\sqrt{\frac{2}{5}}e^{-2\pi\mathrm{i}2/5}&\sqrt{\frac{2}{5}}e^{-2\pi\mathrm{i}3/5}&\sqrt{\frac{2}{5}}e^{-2\pi\mathrm{i}4/5}&0\\
\sqrt{\frac{2}{5}}&\sqrt{\frac{2}{5}}e^{-2\pi\mathrm{i}4/5}&\sqrt{\frac{2}{5}}e^{-2\pi\mathrm{i}3/5}&\sqrt{\frac{2}{5}}e^{-2\pi\mathrm{i}2/5}&\sqrt{\frac{2}{5}}e^{-2\pi\mathrm{i}/5}&0\\     
       \end{array}\right].
\end{equation}

\begin{proof}[Proof of Theorem~\ref{thm.harmonic plus identity}]
Let $F$ denote the resulting $M\times(N+K)$ frame.
We start by verifying that $F$ is unit norm.
Certainly, the identity basis elements have unit norm.
For the remaining frame elements, the modulus of each entry is determined by $D$, and so the norm squared of each frame element is
\begin{equation*}
K(\tfrac{N+K-M}{MN})+(M-K)(\tfrac{N+K}{MN})
=1.
\end{equation*}
To demonstrate that $F$ is tight, it suffices to show that $FF^*=\frac{N+K}{M}I_M$.
The rows of $DH$ are orthogonal since they are scaled rows of the DFT, while the rows of the identity portion are orthogonal because they have disjoint support.
Thus, $FF^*$ is diagonal.
Moreover, the norm squared of each of the first $K$ rows is $N(\frac{N+K-M}{MN})+1=\frac{N+K}{M}$, while the norm squared of each of the remaining rows is $N(\frac{N+K}{MN})=\frac{N+K}{M}$, and so $FF^*=\frac{N+K}{M}I_M$.

To demonstrate that $F$ is full spark, first note that every $M\times M$ submatrix of $DH$ is invertible since
\begin{equation*}
|\mathrm{det}((DH)_\mathcal{K})|
=|\mathrm{det}(DH_\mathcal{K})|
=|\mathrm{det}(D)||\mathrm{det}(H_\mathcal{K})|
>0,
\end{equation*}
by Chebotar\"{e}v's theorem.
Also, in the case where $K=M$, we note that the $M\times M$ submatrix of $F$ composed solely of identity basis elements is trivially invertible.
The only remaining case to check is when identity basis elements and columns of $DH$ appear in the same $M\times M$ submatrix $F_\mathcal{K}$.
In this case, we may shuffle the rows of $F_\mathcal{K}$ to have the form 
\begin{equation*}
\begin{bmatrix} A&0\\B&I_K\end{bmatrix}.
\end{equation*}
Since shuffling rows has no impact on the size of the determinant, we may further use a determinant identity on block matrices to get
\begin{equation*}
|\mathrm{det}(F_\mathcal{K})|
=\bigg|\mathrm{det}\begin{bmatrix} A&0\\B&I_K\end{bmatrix}\bigg|
=|\mathrm{det}(A)\mathrm{det}(I_K)|
=|\mathrm{det}(A)|.
\end{equation*}
Since $A$ is a multiple of a square submatrix of the $N\times N$ DFT, we are done by Chebotar\"{e}v's theorem.
\end{proof}

As an example of Theorem~\ref{thm.harmonic plus identity}, pick $N$ to be a prime congruent to $1\bmod 4$, and select $\frac{N+1}{2}$ rows of the $N\times N$ DFT according to the index set $\mathcal{M}:=\{k^2:k\in\mathbb{Z}_N\}$.
If we take $K=1$, the process in Theorem~\ref{thm.harmonic plus identity} produces an equiangular tight frame of redundancy $2$, which can be verified using quadratic Gauss sums; in the case where $N=5$, this construction produces \eqref{eq.full spark etf example}.
Note that this corresponds to a special case of a construction in Zauner's thesis~\cite{Zauner:phd99}, which was later studied by Renes~\cite{Renes:laa07} and Strohmer~\cite{Strohmer:laa08}.
Theorem~\ref{thm.harmonic plus identity} says that this construction is full spark.

Maximally sparse frames have recently become a subject of active research~\cite{CasazzaHKK:it,FickusMT:laa11}.
We note that when $K=M$, Theorem~\ref{thm.harmonic plus identity} produces a maximally sparse $M\times (N+K)$ full spark frame, having a total of $M(M-1)$ zero entries.
To see that this sparsity level is maximal, we note that if the frame had any more zero entries, then at least one of the rows would have $M$ zero entries, meaning the corresponding $M\times M$ submatrix would have a row of all zeros and hence a zero determinant.
Similar ideas were studied previously by Nakamura and Masson~\cite{NakamuraM:ieeetc82}.

Another interesting case is where $K=M=N$, i.e., when the frame constructed in Theorem~\ref{thm.harmonic plus identity} is a union of the unitary DFT and identity bases.
Unions of orthonormal bases have received considerable attention in the context of sparse approximation~\cite{DonohoE:pnas03,Tropp:acha08}.
In fact, when $N$ is a perfect square, concatenating the DFT with an identity basis forms the canonical example $F$ of a dictionary with small spark~\cite{DonohoE:pnas03}.
To be clear, the Dirac comb of $\sqrt{N}$ spikes is an eigenvector of the DFT, and so concatenating this comb with the negative of its Fourier transform produces a $2\sqrt{N}$-sparse vector in the nullspace of $F$.
In stark contrast, when $N$ is prime, Theorem~\ref{thm.harmonic plus identity} shows that $F$ is full spark.

The vast implications of Chebotar\"{e}v's theorem leads one to wonder whether the result admits any interesting generalization.
In this direction, Cand\`{e}s et al.~\cite{CandesRT:06} note that any such generalization must somehow account for the nontrivial subgroups of $\mathbb{Z}_N$ which are not present when $N$ is prime.
Certainly, if one could characterize the full spark submatrices of a general DFT, this would provide ample freedom to optimize full spark frames for additional considerations.
While we do not have a characterization for the general case, we do have one for the case where $N$ is a prime power.
Before stating the result, we require a definition:

\begin{defn}
We say a subset $\mathcal{M}\subseteq\mathbb{Z}_N$ is \emph{uniformly distributed over the divisors of $N$} if, for every divisor $d$ of $N$, the $d$ cosets of $\langle d\rangle$ partition $\mathcal{M}$ into subsets, each of size $\lfloor\frac{|\mathcal{M}|}{d}\rfloor$ or $\lceil\frac{|\mathcal{M}|}{d}\rceil$.
\end{defn}

At first glance, this definition may seem rather unnatural, but we will discover some important properties of uniformly distributed rows from the DFT.
As an example, we take a short detour by considering the \emph{restricted isometry property (RIP)}, which has received considerable attention recently for its use in compressed sensing.
We say a matrix $F$ is $(K,\delta)$-RIP if
\begin{equation*}
(1-\delta)\|x\|^2\leq\|Fx\|^2\leq(1+\delta)\|x\|^2\qquad\mbox{whenever }\|x\|_0\leq K.
\end{equation*}
Cand\`{e}s and Tao~\cite{CandesT:it05} demonstrated that the sparsest $\Psi^*x$ which satisfies \eqref{eq.sparse signal processing} can be found using $\ell_1$-minimization, provided $F$ is $(2K,\sqrt{2}-1)$-RIP.
Later, Rudelson and Vershynin~\cite{RudelsonV:cpaa08} showed that a matrix of random rows from a DFT and normalized columns is RIP with high probability. 
We will show that harmonic frames satisfy RIP only if the selected row indices are nearly uniformly distributed over sufficiently small divisors of $N$.

To this end, recall that for any divisor $d$ of $N$, the Fourier transform of the $d$-sparse normalized Dirac comb $\frac{1}{\sqrt{d}}\chi_{\langle \frac{N}{d}\rangle}$ is the $\frac{N}{d}$-sparse normalized Dirac comb $\sqrt{\frac{d}{N}}\chi_{\langle d\rangle}$.
Let $F$ be the $N\times N$ unitary DFT, and let $H$ be the harmonic frame which arises from selecting rows of $F$ indexed by $\mathcal{M}$ and then normalizing the columns.
In order for $H$ to be $(K,\delta)$-RIP, $\mathcal{M}$ must contain at least one member of $\langle d\rangle$ for every divisor $d$ of $N$ which is $\leq K$, since otherwise
\begin{equation*}
H\tfrac{1}{\sqrt{d}}\chi_{\langle\frac{N}{d}\rangle}
=\sqrt{\tfrac{N}{|\mathcal{M}|}}(F\tfrac{1}{\sqrt{d}}\chi_{\langle\frac{N}{d}\rangle})_\mathcal{M}
=\sqrt{\tfrac{N}{|\mathcal{M}|}}\Big(\sqrt{\tfrac{d}{N}}\chi_{\langle d\rangle}\Big)_\mathcal{M}
=\sqrt{\tfrac{d}{|\mathcal{M}|}}\chi_{\mathcal{M}\cap\langle d\rangle}
=0, 
\end{equation*}
which violates the lower RIP bound at $x=\frac{1}{\sqrt{d}}\chi_{\langle\frac{N}{d}\rangle}$.
In fact, the RIP bounds indicate that 
\begin{equation*}
\|Hx\|^2
=\|H\tfrac{1}{\sqrt{d}}\chi_{\langle\frac{N}{d}\rangle}\|^2
=\Big\|\sqrt{\tfrac{d}{|\mathcal{M}|}}\chi_{\mathcal{M}\cap\langle d\rangle}\Big\|^2
=\tfrac{d}{|\mathcal{M}|}|\mathcal{M}\cap\langle d\rangle|
\end{equation*}
cannot be more than $\delta$ away from $\|x\|^2=1$.
Similarly, taking $x$ to be $\frac{1}{\sqrt{d}}\chi_{\langle\frac{N}{d}\rangle}$ modulated by $a$, i.e., $x[n]:=\frac{1}{\sqrt{d}}\chi_{\langle\frac{N}{d}\rangle}[n]e^{2\pi ian/N}$ for every $n\in\mathbb{Z}_N$, gives that $\|Hx\|^2=\frac{d}{|\mathcal{M}|}|\mathcal{M}\cap(a+\langle d\rangle)|$ is also no more than $\delta$ away from $1$.
This observation gives the following result:

\begin{thm}
Select rows indexed by $\mathcal{M}\subseteq\mathbb{Z}_N$ from the $N\times N$ discrete Fourier transform matrix and then normalize the columns to produce the harmonic frame $H$.
Then $H$ satisfies the $(K,\delta)$-restricted isometry property only if
\begin{equation*}
\Big|\big|\mathcal{M}\cap(a+\langle d\rangle)\big|-\tfrac{|\mathcal{M}|}{d}\Big|\leq\tfrac{|\mathcal{M}|}{d}\delta
\end{equation*}
for every divisor $d$ of $N$ with $d\leq K$ and every $a=0,\ldots,d-1$.
\end{thm}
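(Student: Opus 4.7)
The plan is to exhibit a family of explicit $d$-sparse unit-norm test vectors $x$ and evaluate $\|Hx\|^2$ directly; the required inequality is then just the RIP window $|\,\|Hx\|^2-1\,|\le\delta$ rewritten.

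Fix a divisor $d$ of $N$ with $d\le K$ and an $a\in\{0,\ldots,d-1\}$. For the test vector I would take the modulated, normalized Dirac comb
\[
x_a[n]:=\tfrac{1}{\sqrt{d}}\,\chi_{\langle N/d\rangle}[n]\,e^{2\pi ian/N},
\]
which is $d$-sparse with $\|x_a\|^2=1$, so RIP applies. The key computation is to identify $H x_a$ via the DFT. Since $F$ is the unitary $N\times N$ DFT, the Fourier transform of $\tfrac{1}{\sqrt d}\chi_{\langle N/d\rangle}$ is the Dirac comb $\sqrt{d/N}\,\chi_{\langle d\rangle}$, and modulation by $a$ on the input corresponds to translation by $a$ on the Fourier side. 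Restricting to the rows indexed by $\mathcal{M}$ and accounting for the column normalization factor $\sqrt{N/|\mathcal{M}|}$ (so that $H$'s columns have unit norm) yields
\[
Hx_a=\sqrt{\tfrac{d}{|\mathcal M|}}\,\chi_{\mathcal M\cap(a+\langle d\rangle)},
\qquad
\|Hx_a\|^2=\tfrac{d}{|\mathcal M|}\,\bigl|\mathcal M\cap(a+\langle d\rangle)\bigr|.
\]
This is essentially the identity already derived in the paragraph preceding the theorem, but carried out with the modulation built in.

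Now I apply $(K,\delta)$-RIP to $x_a$, which is allowed since $\|x_a\|_0=d\le K$. The condition $(1-\delta)\le \|Hx_a\|^2\le(1+\delta)$ becomes
\[
\Bigl|\tfrac{d}{|\mathcal M|}\,\bigl|\mathcal M\cap(a+\langle d\rangle)\bigr|-1\Bigr|\le\delta,
\]
and multiplying through by $|\mathcal M|/d$ gives exactly the claimed inequality. Ranging $d$ over divisors of $N$ with $d\le K$ and $a$ over coset representatives $0,\ldots,d-1$ completes the proof.

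There is no real obstacle here; the only point requiring care is bookkeeping of normalizations. One must keep track of three separate scalings: the $1/\sqrt d$ in $x_a$ (so that $\|x_a\|=1$), the $\sqrt{d/N}$ coming from the unitary DFT of a Dirac comb, and the $\sqrt{N/|\mathcal M|}$ column renormalization that turns the row-subsampled DFT into the unit-norm harmonic frame $H$. Getting these to combine into the clean factor $\sqrt{d/|\mathcal M|}$ in $Hx_a$ is the one place where a slip would derail the argument, but it is routine.
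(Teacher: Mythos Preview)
Your proposal is correct and follows exactly the paper's own argument: the paper derives the theorem in the paragraph immediately preceding its statement by applying the RIP inequality to the modulated Dirac comb $x[n]=\tfrac{1}{\sqrt d}\chi_{\langle N/d\rangle}[n]\,e^{2\pi ian/N}$ and computing $\|Hx\|^2=\tfrac{d}{|\mathcal M|}\,|\mathcal M\cap(a+\langle d\rangle)|$. Your write-up is a clean restatement of that computation, with the normalization bookkeeping made explicit.
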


Now that we have an intuition for uniform distribution in terms of modulated Dirac combs and RIP, we take this condition to the extreme by considering uniform distribution over all divisors.
Doing so produces a complete characterization of full spark harmonic frames when $N$ is a prime power:

\begin{thm}
\label{thm.uniformly distributed}
Let $N$ be a prime power, and select rows indexed by $\mathcal{M}\subseteq\mathbb{Z}_N$ from the $N\times N$ discrete Fourier transform matrix to build the submatrix $F$.
Then $F$ is full spark if and only if $\mathcal{M}$ is uniformly distributed over the divisors of $N$.
\end{thm}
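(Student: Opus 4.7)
My plan is to recast the theorem in terms of sparse Fourier nullvectors and then proceed by induction on $r$ for $N = p^r$. By definition, $F$ fails to be full spark iff for some $\mathcal{K}\subseteq\mathbb{Z}_N$ with $|\mathcal{K}|=|\mathcal{M}|$ there exists a nonzero $y\in\mathbb{C}^N$ supported on $\mathcal{K}$ with $\hat y|_\mathcal{M}=0$. The only subgroups of $\mathbb{Z}_{p^r}$ are $\langle p^j\rangle$ for $j=0,\ldots,r$. The base case $r=1$ of the induction is Chebotar\"ev's theorem, with uniform distribution vacuous there. For the inductive step I will write $H = \langle p\rangle$ (of order $p^{r-1}$) and decompose $y$ into $p$ fibers $\tilde y_c(\ell) := y(c+p\ell)$ for $c \in \{0,\ldots,p-1\}$, $\ell\in\mathbb{Z}_{p^{r-1}}$. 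Using $\omega_N^{p^{r-1}} = \omega_p$ and $\omega_N^p = \omega_{p^{r-1}}$, a short calculation gives
\[
\hat y(b + qp^{r-1}) = \sum_{c=0}^{p-1}\omega_N^{cb}\,\omega_p^{cq}\,\widehat{\tilde y_c}(b),
\]
where $\widehat{\tilde y_c}$ denotes the DFT on $\mathbb{Z}_{p^{r-1}}$. Setting $v_c(b) := \omega_N^{cb}\widehat{\tilde y_c}(b)$ and $Q_b := \{q : b + qp^{r-1}\in\mathcal{M}\}$, the hypothesis $\hat y|_\mathcal{M}=0$ is equivalent to the statement that, for every $b\in\mathbb{Z}_{p^{r-1}}$, the $\mathbb{Z}_p$-DFT of $v(b) := (v_0(b),\ldots,v_{p-1}(b))$ vanishes on $Q_b$.

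For the forward direction, I will assume $\mathcal{M}$ uniformly distributed and suppose for contradiction that such a $y\neq 0$ exists. Chebotar\"ev on $\mathbb{Z}_p$ applied fiber-by-fiber yields: whenever $v(b)\neq 0$, $|\mathrm{supp}(v(b))|\geq|Q_b|+1$. Uniform distribution of $\mathcal{M}$ over $d = p^{r-1}$ controls the sizes $|Q_b|$, while uniform distribution at smaller divisors descends through the fiber decomposition to uniform distribution of the shadows $\{b : v_c(b)\neq 0\}\subseteq \mathbb{Z}_{p^{r-1}}$ used to invoke the inductive hypothesis on each $\tilde y_c$. Combining the per-$b$ Chebotar\"ev bound with the per-$c$ inductive bound should force $|\mathrm{supp}(y)| = \sum_c|\mathrm{supp}(\tilde y_c)| > |\mathcal{M}|$, the desired contradiction.

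For the backward direction, I will take the minimal $j\geq 1$ at which uniformity fails. Minimality concentrates the imbalance inside a single coset of $\langle p^{j-1}\rangle$, so two of the counts $n_a = |\mathcal{M}\cap(a+\langle p^j\rangle)|$ with the same residue modulo $p^{j-1}$ differ by at least two. Chebotar\"ev on $\mathbb{Z}_p$ then supplies a nonzero solution of the $Q_b$-equations with minimal support for the $b$'s indexing the imbalanced class, which I will extend by zero elsewhere and invert via the displayed formula to obtain a nonzero $y$ of support at most $|\mathcal{M}|$ with $\hat y|_\mathcal{M}=0$. The main obstacle will be the forward induction: tracking how uniform distribution of $\mathcal{M}$ at every divisor of $p^r$ descends through the fiber decomposition into the precise uniform-distribution hypothesis on $\mathbb{Z}_{p^{r-1}}$ demanded by the inductive step. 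The backward construction is concrete by comparison but still requires careful counting to ensure the constructed nullvector has support of exactly the target size.
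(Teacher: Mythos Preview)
Your fiber decomposition and the displayed identity are correct, and the per-$b$ Chebotar\"ev step is valid: if $v(b)\neq 0$ then $|\{c:v_c(b)\neq 0\}|\geq |Q_b|+1$. The forward induction, however, breaks down at exactly the point you flag as ``the main obstacle,'' and not merely for bookkeeping reasons. The inductive hypothesis on $\mathbb{Z}_{p^{r-1}}$ says: if $\mathcal{M}'\subseteq\mathbb{Z}_{p^{r-1}}$ is uniformly distributed and $\widehat{\tilde y_c}$ \emph{vanishes} on $\mathcal{M}'$, then $|\mathrm{supp}(\tilde y_c)|>|\mathcal{M}'|$. You never produce such an $\mathcal{M}'$. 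The ``shadows'' $\{b:v_c(b)\neq 0\}$ you mention are the Fourier \emph{supports} of the fibers, not sets on which the fiber transforms vanish; they depend on $y$, and there is no mechanism by which uniform distribution of $\mathcal{M}$ forces them to be uniformly distributed. The only per-fiber vanishing you can extract from your identity is $\widehat{\tilde y_c}(b)=0$ at those $b$ with $|Q_b|=p$, which is generally far too small a set. Summing the per-$b$ Chebotar\"ev bound gives a lower bound on $\sum_c|\mathrm{supp}(\widehat{\tilde y_c})|$, but you need a lower bound on $\sum_c|\mathrm{supp}(\tilde y_c)|$; converting one to the other requires an uncertainty inequality on $\mathbb{Z}_{p^{r-1}}$ that runs the wrong way (Donoho--Stark gives $|\mathrm{supp}|\geq p^{r-1}/|\mathrm{supp}(\widehat{\ \cdot\ })|$, which is weaker when the Fourier support is large). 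So the two halves of your counting argument do not mesh.

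For comparison, the paper avoids induction entirely in the forward direction: it uses an algebraic lemma (essentially Tao's argument) that the $\mathcal{M}$-indexed DFT submatrix is full spark provided the integer
\[
\frac{\prod_{i<j}(m_j-m_i)}{\prod_{m=0}^{M-1}m!}
\]
is not divisible by $p$, and then shows by a direct $p$-adic valuation count that uniform distribution over every $p^k$ makes the $p$-content of the numerator equal that of the denominator. For the backward direction the paper gives a clean, non-inductive construction that works for \emph{any} $N$ (not just prime powers): if some coset of $\langle d\rangle$ meets $\mathcal{M}$ in $\geq\lceil M/d\rceil+1$ points, one chooses $\mathcal{K}$ to be $\lfloor M/d\rfloor$ full cosets of $\langle N/d\rangle$ plus a few extra points, and observes that the resulting $M\times M$ submatrix decomposes into rank-one blocks whose ranks sum to at most $M-1$. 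Your backward sketch may be completable but is presently vague (``two counts with the same residue differ by at least two'' is not immediate from minimality, and the passage from a $\mathbb{Z}_p$-Chebotar\"ev solution back to a $y$ of the correct support size is not spelled out); the paper's block-rank argument is both simpler and more general.
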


Note that, perhaps surprisingly, an index set $\mathcal{M}$ can be uniformly distributed over $p$ but not over $p^2$, and vice versa.
For example, $\mathcal{M}=\{0,1,4\}$ is uniformly distributed over $2$ but not $4$, while $\mathcal{M}=\{0,2\}$ is uniformly distributed over $4$ but not $2$.

Since the first $M$ rows of a DFT form a full spark Vandermonde matrix, let's check that this index set is uniformly distributed over the divisors of $N$.
For each divisor $d$ of $N$, we partition the first $M$ indices into the $d$ cosets of $\langle d\rangle$.
Write $M=qd+r$ with $0\leq r<d$.
The first $qd$ of the $M$ indices are distributed equally amongst all $d$ cosets, and then the remaining $r$ indices are distributed equally amongst the first $r$ cosets.
Overall, the first $r$ cosets contain $q+1=\lfloor\frac{M}{d}\rfloor+1$ indices, while the remaining $d-r$ cosets have $q=\lfloor\frac{M}{d}\rfloor$ indices; thus, the first $M$ indices are indeed uniformly distributed over the divisors of $N$.
Also, when $N$ is prime, \emph{every} subset of $\mathbb{Z}_N$ is uniformly distributed over the divisors of $N$ in a trivial sense.
In fact, Chebotar\"{e}v's theorem follows immediately from Theorem~\ref{thm.uniformly distributed}.
In some ways, portions of our proof of Theorem~\ref{thm.uniformly distributed} mirror recurring ideas in the existing proofs of Chebotar\"{e}v's theorem~\cite{DelvauxV:laa08,EvansI:ams77,StevenhagenL:mi96,Tao:mrl05}.
For the sake of completeness, we provide the full argument and save the reader from having to parse portions of proofs from multiple references.
We start with the following lemmas, whose proofs are based on the proofs of Lemmas 1.2 and 1.3 in~\cite{Tao:mrl05}.

\begin{lem}
\label{lem.multiple of p}
Let $N$ be a power of some prime $p$, and let $P(z_1,\ldots,z_M)$ be a polynomial with integer coefficients.
Suppose there exists $N$th roots of unity $\{\omega_m\}_{m=1}^M$ such that $P(\omega_1,\ldots,\omega_M)=0$.
Then $P(1,\ldots,1)$ is a multiple of $p$.
\end{lem}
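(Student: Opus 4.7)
The plan is to reduce the multivariate statement to a single-variable polynomial divisibility fact about the cyclotomic polynomial $\Phi_N$. Fix a primitive $N$th root of unity $\zeta := e^{2\pi i/N}$ and write $\omega_m = \zeta^{a_m}$ for integers $a_1, \ldots, a_M$. Then substitute $z_m = z^{a_m}$ into $P$ to form the one-variable polynomial
\[
Q(z) := P(z^{a_1}, \ldots, z^{a_M}) \in \mathbb{Z}[z].
\]
By hypothesis, $Q(\zeta) = P(\omega_1, \ldots, \omega_M) = 0$, so the minimal polynomial of $\zeta$ over $\mathbb{Q}$, namely $\Phi_N(z)$, divides $Q(z)$ in $\mathbb{Q}[z]$. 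Since $\Phi_N$ is monic with integer coefficients, polynomial long division (equivalently, Gauss's lemma) shows that the quotient actually lies in $\mathbb{Z}[z]$. Hence we can write $Q(z) = \Phi_N(z) R(z)$ for some $R \in \mathbb{Z}[z]$.

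Next I would evaluate both sides at $z = 1$. On the one hand, $Q(1) = P(1, \ldots, 1)$. On the other hand, we need the well-known fact that $\Phi_N(1) = p$ whenever $N = p^k$ is a prime power with $k \geq 1$. This follows from the identity
\[
\Phi_{p^k}(x) = \frac{x^{p^k} - 1}{x^{p^{k-1}} - 1} = 1 + x^{p^{k-1}} + x^{2p^{k-1}} + \cdots + x^{(p-1)p^{k-1}},
\]
so $\Phi_{p^k}(1) = p$. Therefore $P(1, \ldots, 1) = Q(1) = p \cdot R(1)$, and since $R(1) \in \mathbb{Z}$, this integer is divisible by $p$, which is exactly the claim.

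There is no real obstacle here: the argument is essentially a ``substitute and evaluate'' trick powered by two standard facts (the minimality of $\Phi_N$ for $\zeta$ and the evaluation $\Phi_{p^k}(1) = p$). The only subtlety worth noting is the use of Gauss's lemma (or, equivalently, division by a monic integer polynomial) to ensure $R$ has integer coefficients, which is essential for concluding that $R(1) \in \mathbb{Z}$; without this the conclusion would only be that $P(1, \ldots, 1)/p$ is rational. The hypothesis that $N$ is a prime power enters exclusively through the value $\Phi_N(1) = p$; for general $N$, one has $\Phi_N(1) = 1$ whenever $N$ has two distinct prime factors, which is precisely why the lemma (and Chebotar\"{e}v's theorem itself) is phrased for prime powers.
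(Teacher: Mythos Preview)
Your proof is correct and is essentially identical to the paper's: both substitute $z_m = z^{a_m}$ (for a chosen primitive $N$th root of unity) to reduce to a single-variable $Q(z)\in\mathbb{Z}[z]$ vanishing at that root, invoke divisibility by $\Phi_N$, and evaluate at $z=1$ using $\Phi_N(1)=p$. Your version is merely more explicit about Gauss's lemma and the computation of $\Phi_{p^k}(1)$, but the argument is the same.
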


\begin{proof}
Denoting $\omega:=e^{-2\pi i/N}$, then for every $m=1,\ldots,M$, we have $\omega_m=\omega^{k_m}$ for some $0\leq k_m<N$.
Defining the polynomial $Q(z):=P(z^{k_1},\ldots,z^{k_M})$, we have $Q(\omega)=0$ by assumption.
Also, $Q(z)$ is a polynomial with integer coefficients, and so it must be divisible by the minimal polynomial of $\omega$, namely, the cyclotomic polynomial $\Phi_N(z)$.
Evaluating both polynomials at $z=1$ then gives that $p=\Phi_N(1)$ divides $Q(1)=P(1,\ldots,1)$.
\end{proof}

\begin{lem}
\label{lem.big fraction}
Let $N$ be a power of some prime $p$, and pick $\mathcal{M}=\{m_i\}_{i=1}^M\subseteq\mathbb{Z}_N$ such that
\begin{equation}
\label{eq.big fraction}
\frac{\displaystyle{\prod_{1\leq i<j\leq M}(m_j-m_i)}}{\displaystyle{\prod_{m=0}^{M-1}m!}}
\end{equation}
is not a multiple of $p$.
Then the rows indexed by $\mathcal{M}$ in the $N\times N$ discrete Fourier transform form a full spark frame. 
\end{lem}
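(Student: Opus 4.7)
The plan is to view the determinant of each $M\times M$ submatrix of $F$ as an integer polynomial evaluated at roots of unity, factor out the Vandermonde piece, and then invoke Lemma~\ref{lem.multiple of p} to force a divisibility contradiction with the hypothesis \eqref{eq.big fraction}.

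Concretely, for any distinct column indices $\mathcal{K}=\{k_1,\ldots,k_M\}\subseteq\mathbb{Z}_N$, the submatrix $F_\mathcal{K}$ has entries $\omega^{m_i k_j}$, so I would start from the identity
\[ \det(F_\mathcal{K})=\tilde P(\omega^{k_1},\ldots,\omega^{k_M}),\qquad \tilde P(z_1,\ldots,z_M):=\det(z_j^{m_i})_{i,j=1}^{M}. \]
The polynomial $\tilde P\in\mathbb{Z}[z_1,\ldots,z_M]$ is alternating in the $z_j$'s, so it vanishes whenever $z_i=z_j$; since the $z_j-z_i$ are pairwise coprime irreducibles in the UFD $\mathbb{Z}[z_1,\ldots,z_M]$, their product $V(z):=\prod_{i<j}(z_j-z_i)$ divides $\tilde P$, and the quotient $S(z):=\tilde P(z)/V(z)$ is a symmetric polynomial with integer coefficients.

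Now assume for contradiction that $\det(F_\mathcal{K})=0$. The $\omega^{k_j}$ are distinct $N$th roots of unity because the $k_j$'s are distinct in $\mathbb{Z}_N$, so $V(\omega^{k_1},\ldots,\omega^{k_M})\neq 0$, and hence $S(\omega^{k_1},\ldots,\omega^{k_M})=0$. Applying Lemma~\ref{lem.multiple of p} to the integer polynomial $S$ then yields that $S(1,\ldots,1)$ must be a multiple of $p$.

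The remaining (and main) step is to compute $S(1,\ldots,1)$ and match it with \eqref{eq.big fraction}. I would do this via a Taylor expansion about the all-ones point: substitute $z_j=1+tu_j$ and expand each column of $\tilde P$ using $(1+tu_j)^{m_i}=\sum_{k\geq 0}t^k u_j^k\binom{m_i}{k}$. Multilinearity of the determinant shows that tuples $(k_1,\ldots,k_M)$ with any repeated index contribute $0$, so the leading $t^{M(M-1)/2}$ term arises from permutations of $\{0,1,\ldots,M-1\}$, and antisymmetrizing over these permutations extracts the factor $V(u)=\det(u_j^{i-1})$. Since $V(1+tu)=t^{M(M-1)/2}V(u)$ as well, dividing and letting $t\to 0$ gives
\[ S(1,\ldots,1)=\det\bigl(\binom{m_i}{k}\bigr)_{i=1,\ldots,M;\,k=0,\ldots,M-1}. \]
A column reduction, using that $\binom{m_i}{k}$ is a degree-$k$ polynomial in $m_i$ with leading coefficient $1/k!$, converts this to $\det(m_i^k)/\prod_{k=0}^{M-1}k!=\prod_{i<j}(m_j-m_i)/\prod_{m=0}^{M-1}m!$, which by hypothesis is not a multiple of $p$ --- a contradiction. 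One could alternately cite the Weyl dimension formula for $S$ viewed as a Schur polynomial, but the Taylor-expansion argument keeps everything self-contained within the framework of Lemma~\ref{lem.multiple of p}.
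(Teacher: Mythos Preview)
Your proof is correct and follows the same overall architecture as the paper's: factor the Vandermonde $V(z)=\prod_{i<j}(z_j-z_i)$ out of the generalized Vandermonde determinant $\det(z_j^{m_i})$, then invoke Lemma~\ref{lem.multiple of p} to reduce to showing that the integer $S(1,\ldots,1)$ is not divisible by $p$. The only substantive difference is in how that value is computed. The paper applies the differential operators $(z_1\tfrac{\partial}{\partial z_1})^0\cdots(z_M\tfrac{\partial}{\partial z_M})^{M-1}$ to $D$ and evaluates at $z_1=\cdots=z_M=1$, obtaining $\prod_{m=0}^{M-1}m!\cdot P(1,\ldots,1)$ on one side and $\det(m_i^{n-1})=\prod_{i<j}(m_j-m_i)$ on the other. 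Your Taylor-expansion substitution $z_j=1+tu_j$ accomplishes the same extraction more directly, yielding $S(1,\ldots,1)=\det\binom{m_i}{k}$ and then the same Vandermonde evaluation after column reduction. Your remark that $S$ is a Schur polynomial (so that $S(1,\ldots,1)$ is given by the Weyl dimension formula) is a nice conceptual bonus the paper does not mention; conversely, the paper's operator calculation is perhaps marginally more self-contained in that it avoids any limiting argument.
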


\begin{proof}
We wish to show that $\mathrm{det}(\omega_n^m)_{m\in\mathcal{M},1\leq n\leq M}\neq0$ for all $M$-tuples of distinct $N$th roots of unity $\{\omega_n\}_{n=1}^M$.
Define the polynomial $D(z_1,\ldots,z_M):=\mathrm{det}(z_n^m)_{m\in\mathcal{M},1\leq n\leq M}$.
Since columns $i$ and $j$ of $(z_n^m)_{m\in\mathcal{M},1\leq n\leq M}$ are identical whenever $z_i=z_j$, we know that $D$ vanishes in each of these instances, and so we can factor:
\begin{equation*}
D(z_1,\ldots,z_M)=P(z_1,\ldots,z_M)\prod_{1\leq i<j\leq M}(z_j-z_i)
\end{equation*}
for some polynomial $P(z_1,\ldots,z_M)$ with integer coefficients.
By Lemma~\ref{lem.multiple of p}, it suffices to show that $P(1,\ldots,1)$ is not a multiple of $p$, since this implies $D(\omega_1,\ldots,\omega_M)$ is nonzero for all $M$-tuples of distinct $N$th roots of unity $\{\omega_n\}_{n=1}^M$.

To this end, we proceed by considering
\begin{equation}
\label{eq.A defn}
A:=\bigg(z_1\frac{\partial}{\partial z_1}\bigg)^0\bigg(z_2\frac{\partial}{\partial z_2}\bigg)^1\cdots\bigg(z_M\frac{\partial}{\partial z_M}\bigg)^{M-1}D(z_1,\ldots,z_M)\bigg|_{z_1=\cdots=z_M=1}.
\end{equation}
To compute $A$, we note that each application of $z_j\frac{\partial}{\partial z_j}$ produces terms according to the product rule.
For some terms, a linear factor of the form $z_j-z_i$ or $z_i-z_j$ is replaced by $z_j$ or $-z_j$, respectively.
For each the other terms, these linear factors are untouched, while another factor, such as $P(z_1,\ldots,z_M)$, is differentiated and multiplied by $z_j$.
Note that there are a total of $M(M-1)/2$ linear factors, and only $M(M-1)/2$ differentiation operators to apply.
Thus, after expanding every product rule, there will be two types of terms: terms in which every differentiation operator was applied to a linear factor, and terms which have at least one linear factor remaining untouched.
When we evaluate at $z_1=\cdots=z_M=1$, the terms with linear factors vanish, and so the only terms which remain came from applying every differentiation operator to a linear factor.
Furthermore, each of these terms before the evaluation is of the form $P(z_1,\ldots,z_M)\prod_{1\leq i<j\leq M}z_j$, and so evaluation at $z_1=\cdots=z_M=1$ produces a sum of terms of the form $P(1,\ldots,1)$; to determine the value of $A$, it remains to count these terms.
The $M-1$ copies of $z_M\frac{\partial}{\partial z_M}$ can only be applied to linear factors of the form $z_M-z_i$, of which there are $M-1$, and so there are a total of $(M-1)!$ ways to distribute these operators.
Similarly, there are $(M-2)!$ ways to distribute the $M-2$ copies of $z_{M-1}\frac{\partial}{\partial z_{M-1}}$ amongst the $M-2$ linear factors of the form $z_{M-1}-z_i$.
Continuing in this manner produces an expression for $A$:
\begin{equation}
\label{eq.first A}
A=(M-1)!(M-2)!\cdots 1!0!~P(1,\ldots,1).
\end{equation}

For an alternate expression of $A$, we substitute the definition of $D(z_1,\ldots,z_M)$ into $\eqref{eq.A defn}$.
Here, we exploit the multilinearity of the determinant and the fact that $(z_n\frac{\partial}{\partial z_n})z_n^m=mz_n^m$ to get
\begin{equation}
\label{eq.second A}
A=\mathrm{det}(m^{n-1})_{m\in\mathcal{M},1\leq n\leq M}=\prod_{1\leq i<j\leq M}(m_j-m_i),
\end{equation}
where the final equality uses the fact that $(m^{n-1})_{m\in\mathcal{M},1\leq n\leq M}$ is the transpose of a Vandermonde matrix.
Equating \eqref{eq.first A} to \eqref{eq.second A} reveals that \eqref{eq.big fraction} is an expression for $P(1,\ldots,1)$.  
Thus, by assumption, $P(1,\ldots,1)$ is not a multiple of $p$, and so we are done.
\end{proof}

\begin{proof}[Proof of Theorem~\ref{thm.uniformly distributed}]
($\Leftarrow$) We will use Lemma~\ref{lem.big fraction} to demonstrate that $F$ is full spark.
To apply this lemma, we need to establish that \eqref{eq.big fraction} is not a multiple of $p$, and to do this, we will show that there are as many $p$-divisors in the numerator of \eqref{eq.big fraction} as there are in the denominator.
We start by counting the $p$-divisors of the denominator:
\begin{equation}
\label{eq.denominator 1}
\prod_{m=0}^{M-1}m!=\prod_{m=1}^{M-1}\prod_{\ell=1}^m \ell=\prod_{\ell=1}^{M-1}\prod_{m=1}^{M-l} \ell.
\end{equation}
For each pair of integers $k,a\geq 1$, there are $\max\{M-ap^k,~0\}$ factors in \eqref{eq.denominator 1} of the form $\ell=ap^k$.
By adding these, we count each factor $\ell$ as many times as it can be expressed as a multiple of a power of $p$, which equals the number of $p$-divisors in $\ell$.
Thus, the number of $p$-divisors of \eqref{eq.denominator 1} is
\begin{equation}
\label{eq.denominator 2}
\sum_{k=1}^{\lfloor \log_p M\rfloor}\sum_{a=1}^{\lfloor \frac{M}{p^k}\rfloor}(M-ap^k).
\end{equation}

Next, we count the $p$-divisors of the numerator of \eqref{eq.big fraction}.
To do this, we use the fact that $\mathcal{M}$ is uniformly distributed over the divisors of $N$. 
Since $N$ is a power of $p$, the only divisors of $N$ are smaller powers of $p$.
Also, the cosets of $\langle p^k\rangle$ partition $\mathcal{M}$ into subsets $S_{k,b}:=\{m_i\equiv b\mod p^k\}$.
We note that $m_j-m_i$ is a multiple of $p^k$ precisely when $m_i$ and $m_j$ belong to the same subset $S_{k,b}$ for some $0\leq b<p^k$.
To count $p$-divisors, we again count each factor $m_j-m_i$ as many times as it can be expressed as a multiple of a prime power:
\begin{equation}
\label{eq.numerator 1}
\sum_{k=1}^{\lfloor \log_p M\rfloor}\sum_{b=0}^{p^k-1}\binom{|S_{k,b}|}{2}.
\end{equation}
Write $M=qp^k+r$ with $0\leq r<p^k$.
Then $q=\lfloor\frac{M}{p^k}\rfloor$.
Since $\mathcal{M}$ is uniformly distributed over $p^k$, there are $r$ subsets $S_{k,b}$ with $q+1$ elements and $p^k-r$ subsets with $q$ elements.
We use this to get
\begin{equation*}
\sum_{b=0}^{p^k-1}\binom{|S_{k,b}|}{2}
=\binom{q+1}{2}r+\binom{q}{2}(p^k-r)
=\frac{q}{2}\Big((q-1)p^k+2r+(qp^k-qp^k)\Big).
\end{equation*}
Rearranging and substituting $M=qp^k+r$ then gives
\begin{equation*}
\sum_{b=0}^{p^k-1}\binom{|S_{k,b}|}{2}
=\frac{q}{2}\Big(2M-(q+1)p^k\Big)
=Mq-\binom{q+1}{2}p^k
=\sum_{a=1}^{\lfloor \frac{M}{p^k}\rfloor}(M-ap^k).
\end{equation*}
Thus, there are as many $p$-divisors in the numerator \eqref{eq.numerator 1} as there are in the denominator \eqref{eq.denominator 2}, and so \eqref{eq.big fraction} is not divisible by $p$.
Lemma~\ref{lem.big fraction} therefore gives that $F$ is full spark.

($\Rightarrow$)
We will prove that this direction holds regardless of whether $N$ is a prime power.
Suppose $\mathcal{M}\subseteq\mathbb{Z}_N$ is not uniformly distributed over the divisors of $N$.
Then there exists a divisor $d$ of $N$ such that one of the cosets of $\langle d\rangle$ intersects $\mathcal{M}$ with $\leq\lfloor \frac{M}{d}\rfloor-1$ or $\geq\lceil\frac{M}{d}\rceil+1$ indices.
Notice that if a coset of $\langle d\rangle$ intersects $\mathcal{M}$ with $\leq\lfloor \frac{M}{d}\rfloor-1$ indices, then the complement $\mathcal{M}^\mathrm{c}$ intersects the same coset with $\geq\lceil\frac{N-M}{d}\rceil+1=\lceil\frac{|\mathcal{M}^\mathrm{c}|}{d}\rceil+1$ indices.
By Theorem~\ref{thm.closure}(iii), $\mathcal{M}$ produces a full spark harmonic frame precisely when $\mathcal{M}^\mathrm{c}$ produces a full spark harmonic frame, and so we may assume without loss of generality that there exists a coset of $\langle d\rangle$ which intersects $\mathcal{M}$ with $\geq\lceil\frac{M}{d}\rceil+1$ indices.

To prove that the rows with indices in $\mathcal{M}$ are not full spark, we find column entries which produce a singular submatrix.
Writing $M=qd+r$ with $0\leq r<d$, let $\mathcal{K}$ contain $q=\lfloor\frac{M}{d}\rfloor$ cosets of $\langle\frac{N}{d}\rangle$ along with $r$ elements from an additional coset.
We claim that the DFT submatrix with row entries $\mathcal{M}$ and column entries $\mathcal{K}$ is singular.
To see this, shuffle the rows and columns to form a matrix $A$ in which the row entries are grouped into common cosets of $\langle d\rangle$ and the column entries are grouped into common cosets of $\langle\frac{N}{d}\rangle$.
This breaks $A$ into rank-1 submatrices: each pair of cosets $a+\langle d\rangle$ and $b+\langle\frac{N}{d}\rangle$ produces a submatrix
\begin{equation*}
(\omega^{(a+id)(b+j\frac{N}{d})})_{i\in\mathcal{I},j\in\mathcal{J}}
=\omega^{ab}(\omega^{bdi}\omega^{a\frac{N}{d}j})_{i\in\mathcal{I},j\in\mathcal{J}}
\end{equation*}
for some index sets $\mathcal{I}$ and $\mathcal{J}$; this is a rank-1 outer product.
Let $\mathcal{L}$ be the largest intersection between $\mathcal{M}$ and a coset of $\langle d\rangle$.
Then $|\mathcal{L}|\geq\lceil\frac{M}{d}\rceil+1$ is the number of rows in the tallest of these rank-1 submatrices.
Define $A_\mathcal{L}$ to be the $M\times M$ matrix with entries $A_\mathcal{L}[i,j]=A[i,j]$ whenever $i\in\mathcal{L}$ and zero otherwise.
Then 
\begin{equation}
\label{eq.rank 1}
\mathrm{Rank}(A)
=\mathrm{Rank}(A_\mathcal{L}+A-A_\mathcal{L})
\leq\mathrm{Rank}(A_\mathcal{L})+\mathrm{Rank}(A-A_\mathcal{L}).
\end{equation}
Since $A-A_\mathcal{L}$ has $|\mathcal{L}|$ rows of zero entries, we also have
\begin{equation}
\label{eq.rank 2}
\mathrm{Rank}(A-A_\mathcal{L})
\leq M-|\mathcal{L}|\leq M-(\lceil\tfrac{M}{d}\rceil+1).
\end{equation}
Moreover, since we can decompose $A_\mathcal{L}$ into a sum of $\lceil\frac{M}{d}\rceil$ zero-padded rank-1 submatrices, we have $\mathrm{Rank}(A_\mathcal{L})
\leq \lceil\frac{M}{d}\rceil$.
Combining this with \eqref{eq.rank 1} and \eqref{eq.rank 2} then gives that $\mathrm{Rank}(A)\leq M-1$, and so the DFT submatrix is not invertible.
\end{proof}

Note that our proof of Theorem~\ref{thm.uniformly distributed} establishes the necessity of having row indices uniformly distributed over the divisors of $N$ in the general case.
This leaves some hope for completely characterizing full spark harmonic frames.
Naturally, one might suspect that the uniform distribution condition is sufficient in general, but this suspicion fails when $N=10$.
Indeed, the following DFT submatrix is singular despite the row indices being uniformly distributed over the divisors of $10$:
\begin{equation*}
(e^{-2\pi i mn/10})_{m\in\{0,1,3,4\},n\in\{0,1,2,6\}}.
\end{equation*}

Also, just as we used Chebotar\"{e}v's theorem to analyze the harmonic equiangular tight frames from Xia et al.~\cite{XiaZG:it05}, we can also use Theorem~\ref{thm.uniformly distributed} to determine whether harmonic equiangular tight frames with a prime power number of frame elements are full spark.
Unfortunately, none of the infinite families in~\cite{XiaZG:it05} have the number of frame elements in the form of a prime power (other than primes).
Luckily, there is at least one instance in which the number of frame elements happens to be a prime power: the harmonic frames that arise from Singer difference sets have 
$M=\frac{q^d-1}{q-1}$ and $N=\frac{q^{d+1}-1}{q-1}$ for a prime power $q$ and an integer $d\geq2$; when $q=3$ and $d=4$, the number of frame elements $N=11^2$ is a prime power.
In this case, the row indices we select are
\begin{align*}
\mathcal{M}
=&\{1, 2, 3, 6, 7, 9, 11, 18, 20, 21, 25, 27, 33, 34, 38, 41, 44, 47, 53, 54, 55, 56, \\
& ~~~  58, 59, 60, 63, 64, 68, 70, 71, 75, 81, 83, 89, 92, 99, 100, 102, 104, 114\},
\end{align*}
but these are not uniformly distributed over 11, and so the corresponding harmonic frame is not full spark by Theorem~\ref{thm.uniformly distributed}.

\section{Full spark Parseval frames are dense}

Recently, Lu and Do~\cite{LuD:08} showed that full spark frames are dense in the entire set of matrices.
This corresponds to our intuition that a matrix whose entries are independent continuous random variables is full spark with probability one, which was also recently proved by Blumensath and Davies~\cite{BlumensathD:09}.
By contrast, as noted by Gorodnitsky and Rao~\cite{GorodnitskyR:97}, certain classes of frames which arise in practice, such as in physical tomography, are never full spark.
This issue also occurs in frame theory: Steiner ETFs form one of the largest known classes of ETFs, and yet none of them are full spark~\cite{FickusMT:laa11}.
As such, Bourguignon et al.~\cite{BourguignonCI:07} were prompted to prove that, among all Vandermonde frames with bases in the complex unit circle, full spark frames are dense.
In this section, we consider a very important class of frames, namely, those which exhibit Parseval tightness, where the frame bound is $1$.
Specifically, we show that full spark Parseval frames are dense in the entire set of Parseval frames.
Unlike the previous work in this vein, our techniques exploit general concepts in algebraic geometry, lending themselves to future application in proving further density results.

In order to make our arguments rigorous, we view each entry $F_{mn}$ of the $M\times N$ matrix $F$ in terms of its real and imaginary parts: $x_{mn}+iy_{mn}$; this decomposition will become helpful later when we consider inner products between rows of $F$, which are not algebraic operations on complex vectors.
Recall that $F$ is full spark precisely when each $M\times M$ submatrix has nonzero determinant.
We note that for each submatrix, the determinant is a polynomial in the $x_{mn}$'s and $y_{mn}$'s, and having this polynomial be nonzero is equivalent to having either its real or imaginary part be nonzero.
This naturally leads us to the following definition from algebraic geometry:
A \emph{real algebraic variety} is the set of common zeros of a finite set of polynomials, that is, given polynomials $p_1,\ldots,p_r\in\mathbb{R}[x_1,\ldots,x_k]$, we define the corresponding real algebraic variety by
\begin{equation*}
V(p_1,\ldots,p_r):=\{x\in\mathbb{R}^k:p_1(x)=\cdots=p_r(x)=0\}.
\end{equation*}
Each submatrix determinant corresponds to a real algebraic variety $V\subseteq\mathbb{R}^{2MN}$ of two polynomials, and having this determinant be nonzero is equivalent to restricting to the complement of $V$.

In general, a variety is equipped with a topology known as the \emph{Zariski topology}, in which subvarieties are the closed sets.
As such, the set of matrices with a nonzero determinant forms a Zariski-open set, since it is the complement of a variety.
In order to exploit this Zariski-openness, we require the additional concept of irreducibility: 
A variety is said to be \emph{irreducible} if it cannot be written as a finite union of proper subvarieties.  
As an example, the entire space $\mathbb{R}^k$ is the variety which corresponds to the zero polynomial; in this case, every proper subvariety is lower-dimensional, and so $\mathbb{R}^k$ is trivially irreducible.
On the other hand, the variety in $\mathbb{R}^2$ defined by $xy=0$ is not irreducible because it can be expressed as the union of varieties defined by $x=0$ and $y=0$.
Irreducibility is important because it says something about Zariski-open sets: 

\begin{thm}
\label{thm.irreducible dense}
If $V$ is an irreducible algebraic variety, then every nonempty Zariski-open subset of $V$ is dense in $V$ in the standard topology.
\end{thm}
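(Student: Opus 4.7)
The plan is to show the standard-topology closure of $U$ in $V$ equals all of $V$, by first reducing to a single polynomial and then invoking a real-analytic identity argument. Write $V \setminus U = V \cap V(q_1, \ldots, q_r)$ for polynomials $q_i \in \mathbb{R}[x_1, \ldots, x_k]$, where $V \subseteq \mathbb{R}^k$. Since $V \setminus U$ is a proper subvariety of $V$, at least one $q_i$, call it $q$, fails to vanish identically on $V$. Because $V \setminus V(q) \subseteq U$, it suffices to prove that $V \setminus V(q)$ is standard-dense in $V$, i.e., that $V \cap V(q)$ has empty standard-topology interior in $V$. Were this to fail, some Euclidean ball $B \subseteq \mathbb{R}^k$ would produce a nonempty relatively open set $B \cap V \subseteq V(q)$, and the theorem would reduce to the key claim: a polynomial vanishing on a nonempty Euclidean-open subset of an irreducible real algebraic variety must vanish identically on that variety, contradicting our choice of $q$.

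To prove the key claim I would use that the smooth locus $V_{\mathrm{sm}}$ is Zariski-open and Zariski-dense in $V$ and locally carries the structure of a real-analytic manifold of dimension equal to the Krull dimension of $V$. Restricted to $V_{\mathrm{sm}}$, the polynomial $q$ is real-analytic, so the identity theorem for real-analytic functions propagates its vanishing from the Euclidean-open set $B \cap V_{\mathrm{sm}}$ to the entire connected component of $V_{\mathrm{sm}}$ containing it. To reach all of $V_{\mathrm{sm}}$ across distinct connected components, I would pass to the complexification $V_{\mathbb{C}}$: when $V$ is irreducible of full real dimension, $V_{\mathbb{C}}$ is a complex-irreducible algebraic variety whose complex smooth locus is classically connected, and complex analytic continuation forces $q$ to vanish on all of $V_{\mathbb{C}}$ and hence on $V$. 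Combined with the Zariski-density of $V_{\mathrm{sm}}$ in $V$, this completes the argument.

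The main obstacle is precisely this final propagation step, since algebraic irreducibility over $\mathbb{R}$ does not imply connectedness of the real points: the hyperbola $x^2 - y^2 = 1$ is irreducible but has two Euclidean components. The complexification argument finesses this by trading the unwieldy real topology for the more rigid complex-analytic category, where Zariski-irreducibility does imply classical connectedness of the smooth locus. Degenerate cases in which the real locus has strictly smaller real dimension than the complex Krull dimension (for instance $V(x^2 + y^2) = \{(0,0)\}$) require a brief separate check, but in those cases any nonempty Euclidean-open subset of $V$ is forced to equal all of $V$, and the density conclusion is immediate.
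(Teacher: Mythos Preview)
The paper does not prove this statement; it is quoted as background from algebraic geometry and then applied only to $\mathbb{R}^{2MN}$ and to the Stiefel manifold, both of which are already smooth and connected as real manifolds. So there is no paper argument to compare yours against, and in those applications the real-analytic identity theorem on $V$ itself suffices directly, with neither the smooth-locus bookkeeping nor the complexification detour needed.

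For the general statement your outline is the standard one and is essentially correct. One clarification worth making: in the paper's set-theoretic framework a real variety is a subset $V\subseteq\mathbb{R}^k$ and the relevant ideal is the full vanishing ideal $I(V)$, which is automatically real-radical. With that convention every irreducible $V$ has smooth real points, and at such points the local real dimension equals the Krull dimension of $\mathbb{R}[x_1,\ldots,x_k]/I(V)$; thus the ``degenerate case'' you flag does not actually occur. In your own example $V(x^2+y^2)\subseteq\mathbb{R}^2$, the vanishing ideal is $I(V)=(x,y)$, of Krull dimension $0$, and the origin is a smooth point. Your claim that $V_{\mathbb{C}}$ is irreducible is then also correct: a smooth real point forces $-1$ to be a nonsquare in the function field of $V$, which is precisely the obstruction to $V_{\mathbb{C}}$ splitting. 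With these points understood, the argument goes through.
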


For example, the hyperplane defined by $x_1=0$ is a subvariety of $\mathbb{R}^k$, and since $\mathbb{R}^k$ is irreducible, the complement of the hyperplane is dense in the standard topology.
Going back to the variety defined by $xy=0$, we know it can be expressed as a union of proper subvarieties, namely the $x$- and $y$-axes, and complementing one gives a subset of the other, neither of which is dense in the entire variety.

We are now ready to prove the following result:

\begin{thm}
\label{thm.overall density}
Every matrix is arbitrarily close to a full spark frame.
\end{thm}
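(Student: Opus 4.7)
The plan is to view the space of $M \times N$ complex matrices as the real affine space $\mathbb{R}^{2MN}$, parameterized by the real and imaginary parts $x_{mn}, y_{mn}$ of the entries $F_{mn}$. This space is (trivially) an irreducible real algebraic variety---it is the zero set of the zero polynomial---so Theorem~\ref{thm.irreducible dense} will apply to it. The overall strategy is to identify the non-full-spark matrices as a finite union of real subvarieties, exhibit one full spark example to certify that each of these subvarieties is proper, and then invoke irreducibility to conclude density.

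For each size-$M$ column subset $\mathcal{K}\subseteq\{1,\ldots,N\}$, the determinant of the corresponding $M\times M$ submatrix is a polynomial in the complex entries of $F$, so its real and imaginary parts are polynomials $p_\mathcal{K}, q_\mathcal{K}\in\mathbb{R}[x_{mn},y_{mn}]$. The submatrix is singular precisely when both $p_\mathcal{K}$ and $q_\mathcal{K}$ vanish, so the set of matrices whose $\mathcal{K}$-indexed submatrix is non-invertible is the real algebraic variety $V_\mathcal{K}:=V(p_\mathcal{K},q_\mathcal{K})\subseteq\mathbb{R}^{2MN}$. The set of matrices that fail to be full spark is then the finite union
\begin{equation*}
\bigcup_{|\mathcal{K}|=M} V_\mathcal{K},
\end{equation*}
and its complement inside $\mathbb{R}^{2MN}$ is Zariski-open.

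It then suffices to verify that this Zariski-open set is \emph{nonempty}, i.e., to exhibit a single full spark $M\times N$ matrix; any such example simultaneously certifies that every $V_\mathcal{K}$ is a proper subvariety. Lemma~\ref{lem.vandermonde} provides one for free: any Vandermonde matrix with distinct bases is full spark, so each of its $M\times M$ submatrices has nonzero determinant, placing this matrix outside every $V_\mathcal{K}$. Consequently the full spark matrices form a nonempty Zariski-open subset of the irreducible variety $\mathbb{R}^{2MN}$, and Theorem~\ref{thm.irreducible dense} asserts that such a set is dense in the standard topology. This is precisely the statement that every $M\times N$ matrix is arbitrarily close to a full spark frame.

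The main conceptual point---and the one small place that requires care---is that a complex determinant does not by itself define a real algebraic set, so one must decompose into real and imaginary parts in order to land in the polynomial ring $\mathbb{R}[x_{mn},y_{mn}]$ where the Zariski topology is being applied. Beyond this bookkeeping, the argument is a clean application of the algebraic-geometric machinery established immediately above, and no difficult step remains; the Vandermonde example supplied by Lemma~\ref{lem.vandermonde} does all the combinatorial work in a single stroke.
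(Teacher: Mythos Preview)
Your proof is correct and follows essentially the same approach as the paper: identify the non-full-spark locus as a finite union of real subvarieties of the irreducible variety $\mathbb{R}^{2MN}$, exhibit one full spark example to certify nonemptiness of the Zariski-open complement, and invoke Theorem~\ref{thm.irreducible dense}. The only cosmetic difference is that the paper cites the first $M$ rows of the $N\times N$ DFT as its full spark witness, whereas you invoke Lemma~\ref{lem.vandermonde} directly---but of course the DFT example is itself a Vandermonde matrix with distinct bases, so the two are interchangeable.
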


\begin{proof}
In an $M\times N$ matrix, there are $\binom{N}{M}$ submatrices of size $M\times M$, and the determinant of each of these submatrices corresponds to a variety of two real polynomials.
Since the set of $M\times N$ full spark frames is defined as the (finite) intersection of the complements of these varieties, it is a Zariski-open subset of the irreducible variety $\mathbb{R}^{2MN}$ of all $M\times N$ matrices.
Moreover, this set is nonempty since it contains the matrix formed by the first $M$ rows of the $N\times N$ DFT, and so we are done by Theorem~\ref{thm.irreducible dense}.
\end{proof}

We now focus on the set of \emph{Parseval frames}, that is, tight frames with frame bound $1$.
Note that $M\times N$ Parseval frames are characterized by the rows forming an orthonormal system of size $M$ in $N$-dimensional space.
The set of all such orthonormal systems is known as the \emph{Stiefel manifold}, denoted $\mathrm{St}(M,N)$.
In general, a \emph{manifold} is a set of vectors with a well-defined tangent space at every point in the set.
Since this manifold property is nice, we would like to think of varieties as manifolds, but there exist varieties with points at which tangent spaces are not well-defined; such points are called \emph{singularities}.
Note that our definition of singularity is geometric, i.e., where the variety fails to be a real differentiable manifold, as opposed to algebraic.
For example, the variety defined by $xy=0$ is the union of the $x$- and $y$-axes, and as such, has a singularity at $x=y=0$.
Certainly with different types of varieties, there are other types of singularities which may arise, but there are also many varieties which do not have singularities at all, and we call these varieties \emph{nonsingular}.
Nonsingularity is a useful property for the following reason:

\begin{thm}
\label{thm.irreducible}
An algebraic variety which is nonsingular and connected is necessarily irreducible.
\end{thm}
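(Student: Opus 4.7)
The plan is to argue by contradiction. Suppose $V$ is nonsingular and connected but reducible, and take its irreducible decomposition $V = V_1 \cup \cdots \cup V_k$ with $k \geq 2$; group this as $V = V_1 \cup W$, where $W := V_2 \cup \cdots \cup V_k$. Then $V_1$ is irreducible, both $V_1$ and $W$ are proper subvarieties of $V$, and $V_1 \cap W$ is a proper subvariety of $V_1$ (since no $V_i$ with $i \geq 2$ contains $V_1$), so $\dim(V_1 \cap W) < \dim V_1 \leq \dim V$. Since $V$ is a nonsingular connected variety, it is a smooth manifold of some constant dimension $d$.

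The crucial input from nonsingularity is that at every point $p \in V$, the local analytic germ of $V$ at $p$ is irreducible: the implicit function theorem identifies $V$ near $p$ with an open piece of $\mathbb{R}^d$, whose analytic germ is irreducible. So for each $p$, the decomposition $V_p = (V_1)_p \cup W_p$ forces one of the two subgerms to equal $V_p$. Accordingly I would define the open sets $U_1 := \mathrm{int}_V(V_1)$ and $U_W := \mathrm{int}_V(W)$ in the standard topology on $V$, and the local-irreducibility observation yields $U_1 \cup U_W = V$.

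The main step is to show that $U_1$ is also closed. If $q \in \overline{U_1} \setminus U_1$, then by $U_1 \cup U_W = V$ we have $q \in U_W$, so $W$ contains an open neighborhood $N$ of $q$ in $V$. For any sequence $q_n \in U_1$ with $q_n \to q$, we eventually have $q_n \in N \subseteq W$ while simultaneously $V_1$ contains an open neighborhood of $q_n$. Intersecting these two neighborhoods forces $V_1 \cap W$ to contain a $d$-dimensional open piece of $V$, contradicting $\dim(V_1 \cap W) < d$. Hence $U_1$ is clopen; by connectedness it equals $\emptyset$ or $V$. In the $U_1 = V$ case, $V_1$ itself is open as well as closed, and a second application of connectedness gives $V_1 = V$, contradicting properness; the $U_1 = \emptyset$ case yields $U_W = V$ and, symmetrically, $W = V$.

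The main obstacle I expect is justifying the local irreducibility of $V$ at each point cleanly in the real algebraic setting, where one must work with analytic germs rather than merely Zariski-local data. Once that input is secured, the rest of the argument is essentially bookkeeping about interiors, convergent sequences, and dimensions, with connectedness applied twice: once to show $U_1$ is all of $V$, and once to upgrade this to $V_1 = V$.
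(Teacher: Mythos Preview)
The paper does not actually prove this theorem; immediately after stating it, the authors write that it ``follows from Theorem~I.5.1 and Remark~III.7.9.1 of Hartshorne,'' with the caveat that Hartshorne works over an algebraically closed field but ``the proof is unaffected when removing the algebraically-closed assumption.'' Your proposal is therefore a genuinely different route: a self-contained topological argument in place of a citation. The underlying idea is the classical one---at a nonsingular point the relevant local ring (for you, the ring of real-analytic germs) is an integral domain, so no smooth point can lie on two components, and connectedness then forces a single component---but you carry it out concretely via interiors, the identity-theorem/Baire fact that two proper real-analytic subsets cannot cover a ball in $\mathbb{R}^d$, and a clopen argument. What your version buys is an explicit argument adapted to the real setting; what the paper's citation buys is brevity, at the cost of leaving the passage to $\mathbb{R}$ unexamined. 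You have correctly isolated the one genuinely delicate step: obtaining real-\emph{analytic} (not merely smooth) local charts, so that $V_1$ and $W$ pull back to honest analytic subsets, requires the algebraic notion of nonsingularity, whereas the paper's stated hypothesis is the geometric one; this gap is equally present in the paper's appeal to Hartshorne. One small simplification in your closedness step: rather than the dimension count, you can invoke the paper's Theorem~\ref{thm.irreducible dense} directly---$V_1 \cap W$ is a proper Zariski-closed subset of the irreducible $V_1$, so its complement is Euclidean-dense in $V_1$, whence $V_1 \cap W$ cannot contain a nonempty Euclidean-open piece of $V$.
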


This result follows from Theorem I.5.1 and Remark III.7.9.1 of Hartshorne~\cite{Hartshorne:book77}, which assumes that the variety is over an algebraically closed field, unlike $\mathbb{R}$; however, the proof is unaffected when removing the algebraically-closed assumption.

Note that the orthonormality conditions which characterize Parseval frames $F$ can be expressed as polynomial equations in the real and imaginary parts of the entries $F_{mn}$.
As such, we may view the Stiefel manifold as a real algebraic variety; this variety is nonsingular because it is a manifold.
Moreover, the variety is connected because the (connected) unitary group $\mathrm{U}(N)$ acts transitively on $\mathrm{St}(M,N)$.
Thus by Theorem~\ref{thm.irreducible}, the variety of Parseval frames is irreducible.
Having established this, we can now prove the following result:

\begin{thm}
\label{thm.parseval density}
Every Parseval frame is arbitrarily close to a full spark Parseval frame.
\end{thm}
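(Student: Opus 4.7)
The plan is to mirror the proof of Theorem~\ref{thm.overall density}, but working inside the ambient variety $\mathrm{St}(M,N)$ rather than $\mathbb{R}^{2MN}$. By the discussion immediately preceding the statement, $\mathrm{St}(M,N)$ is a real algebraic variety (cut out by the polynomial orthonormality conditions on the real and imaginary parts of the entries) which is nonsingular and connected, hence irreducible by Theorem~\ref{thm.irreducible}.

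Next I would observe that the full spark condition is Zariski-open. Precisely, for each of the $\binom{N}{M}$ submatrices indexed by $\mathcal{K}\subseteq\{1,\ldots,N\}$ with $|\mathcal{K}|=M$, the determinant is a complex polynomial in the entries whose real and imaginary parts are polynomials in the $x_{mn}$'s and $y_{mn}$'s, defining a real algebraic variety $V_\mathcal{K}\subseteq\mathbb{R}^{2MN}$. The set of full spark matrices is the complement $\mathbb{R}^{2MN}\setminus\bigcup_\mathcal{K}V_\mathcal{K}$, and so the set of full spark Parseval frames is the intersection of this Zariski-open set with $\mathrm{St}(M,N)$, which is Zariski-open in $\mathrm{St}(M,N)$.

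The main obstacle is establishing nonemptiness, since Theorem~\ref{thm.irreducible dense} only yields density for nonempty Zariski-open subsets. For this I would exhibit an explicit example: the matrix $F$ whose $(m,n)$ entry is $\frac{1}{\sqrt{N}}\omega^{mn}$ for $0\leq m\leq M-1$ and $0\leq n\leq N-1$, consisting of the first $M$ rows of the unitary $N\times N$ discrete Fourier transform. Its rows form an orthonormal system in $\mathbb{C}^N$, so $F\in\mathrm{St}(M,N)$ is Parseval. Moreover $F$ is (a nonzero scalar multiple of) a Vandermonde matrix with pairwise distinct bases $\{\omega^n\}_{n=0}^{N-1}$, so Lemma~\ref{lem.vandermonde} gives that $F$ is full spark.

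Having produced one full spark Parseval frame, the intersection above is a nonempty Zariski-open subset of the irreducible variety $\mathrm{St}(M,N)$, and Theorem~\ref{thm.irreducible dense} then yields that it is dense in $\mathrm{St}(M,N)$ in the standard topology. Since $\mathrm{St}(M,N)$ is exactly the set of $M\times N$ Parseval frames, this is the desired conclusion.
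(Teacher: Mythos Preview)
Your proof is correct and follows essentially the same route as the paper: identify the Parseval frames with the irreducible variety $\mathrm{St}(M,N)$ (via Theorem~\ref{thm.irreducible}), note that the full spark condition is Zariski-open, exhibit the first $M$ rows of the $N\times N$ DFT as a full spark Parseval frame to get nonemptiness, and conclude by Theorem~\ref{thm.irreducible dense}. Your write-up is in fact slightly more explicit than the paper's (you spell out the $\tfrac{1}{\sqrt{N}}$ normalization and why scaling preserves full spark), but the argument is the same.
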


\begin{proof}
Proceeding as in the proof of Theorem~\ref{thm.overall density}, we have that the set of $M\times N$ full spark Parseval frames is Zariski-open in the irreducible variety of all $M\times N$ Parseval frames.
Again considering the first $M$ rows of the $N\times N$ DFT, we know this set is nonempty, and so we are done by Theorem~\ref{thm.irreducible dense}.
\end{proof}

We note that Theorems~\ref{thm.overall density} and~\ref{thm.parseval density} are also true when we further require the frames to be real.
In this case, we cannot use the first $M$ rows of the $N\times N$ DFT to establish that the Zariski-open sets are nonempty.
For the real version of Theorem~\ref{thm.overall density}, we use an $M\times N$ Vandermonde matrix with distinct real bases; see Lemma~\ref{lem.vandermonde}.
However, this construction must be modified to use it in the proof of the real version of Theorem~\ref{thm.parseval density}, since such Vandermonde matrices will not be tight; see Theorem~\ref{thm.vandermonde}.
Given an $M\times N$ full spark frame $F$, the modification $G:=(FF^*)^{-1/2}F$ is full spark and Parseval; indeed, $GG^*=I_M$, and since $F$ is a frame, $(FF^*)^{-1/2}$ is full rank, and so the columns of $G$ are linearly independent precisely when the corresponding columns of $F$ are linearly independent.

Another way that the proof of Theorem~\ref{thm.parseval density} changes in the real case is in verifying that the real Stiefel manifold is irreducible.
Just as in the complex case, this follows from Theorem~\ref{thm.irreducible} since $\mathrm{St}(M,N)$ is connected~\cite{Rapcsak:ejor02}, but the fact that $\mathrm{St}(M,N)$ is connected in the real case is not immediate.
By analogy, the orthogonal group $\mathrm{O}(N)$ certainly acts transitively on $\mathrm{St}(M,N)$, but unlike the unitary group, the orthogonal group has two connected components.
Intuitively, this is resolved by the fact that $N>M$, granting additional freedom of movement throughout $\mathrm{St}(M,N)$.

In addition to Theorems~\ref{thm.overall density} and~\ref{thm.parseval density}, we would like a similar result for unit norm tight frames, i.e., that every unit norm tight frame is arbitrarily close to a full spark unit norm tight frame.
Certainly, the set of unit norm tight frames is a real algebraic variety, but it is unclear whether this variety is irreducible.
Without knowing whether the variety is irreducible, we can follow the proofs of Theorems~\ref{thm.overall density} and~\ref{thm.parseval density} to conclude that full spark unit norm tight frames are dense in the irreducible components in which they exist---a far cry from the density result we seek.
This illustrates a significant gap in our current understanding of the variety of unit norm tight frames.
It should be mentioned that Strawn~\cite{Strawn:jfaa11} showed that the variety of $M\times N$ unit norm tight frames (over real or complex space) is nonsingular precisely when $M$ and $N$ are relatively prime.
Additionally, Dykema and Strawn~\cite{DykemaS:ijpam06} proved that the variety of $2\times N$ real unit norm tight frames is connected, and so by Theorem~\ref{thm.irreducible}, this variety is irreducible when $N$ is odd.
It is unknown whether the variety of $M\times N$ unit norm tight frames is connected in general.
Finally, we note that a Theorem~\ref{thm.parseval density} gives a weaker version of the result we would like: every unit norm tight frame is arbitrarily close to a full spark tight frame with frame element lengths arbitrarily close to $1$.

\section{The computational complexity of verifying full spark}

In the previous section, we demonstrated the abundance of full spark frames, even after imposing the additional condition of tightness.
But how much computation is required to check whether a particular frame is full spark?
At the heart of the matter is computational complexity theory, which provides a rigorous playing field for expressing how hard certain problems are.
In this section, we consider the complexity of the following problem:

\begin{prob}[\textsc{Full Spark}]
\label{prob.full spark}
Given a matrix, is it full spark?
\end{prob}

For the lay mathematician, \textsc{Full Spark} is ``obviously'' $\NP$-hard because the easiest way he can think to solve it for a given $M\times N$ matrix is by determining whether each of the $M\times M$ submatrices is invertible; computing $\binom{N}{M}$ determinants would do, but this would take a lot of time, and so \textsc{Full Spark} must be $\NP$-hard.
However, computing $\binom{N}{M}$ determinants may not necessarily be the fastest way to test whether a matrix is full spark.
For example, perhaps there is an easy-to-calculate expression for the product of the determinants; after all, this product is nonzero precisely when the matrix is full spark.
Recall that Theorem~\ref{thm.uniformly distributed} gives a very straightforward litmus test for \textsc{Full Spark} in the special case where the matrix is formed by rows of a DFT of prime-power order---who's to say that a version of this test does not exist for the general case?
If such a test exists, then it would suffice to find it, but how might one disprove the existence of any such test?
Indeed, since we are concerned with the necessary amount of computation, as opposed to a sufficient amount, the lay mathematician's intuition is a bit misguided.

To discern how much computation is necessary, the main feature of interest is a problem's \emph{complexity}.
We use complexity to compare problems and determine whether one is harder than the other.
As an example of complexity, intuitively, doubling an integer is no harder than adding integers, since one can use addition to multiply by $2$; put another way, the complexity of doubling is somehow ``encoded'' in the complexity of adding, and so it must be lesser (or equal).
To make this more precise, complexity theorists use what is called a \emph{polynomial-time reduction}, that is, a polynomial-time algorithm that solves problem $A$ by exploiting an oracle which solves problem $B$; the reduction indicates that solving problem $A$ is no harder than solving problem $B$ (up to polynomial factors in time), and we say ``$A$ reduces to $B$,'' or $A\leq B$.
Since we can use the polynomial-time routine $x+x$ to produce $2x$, we conclude that doubling an integer reduces to adding integers, as expected.

In complexity theory, problems are categorized into complexity classes according to the amount of resources required to solve them.
For example, the complexity class $\P$ contains all problems which can be solved in polynomial time, while problems in $\EXP$ may require as much as exponential time.
Problems in $\NP$ have the defining quality that solutions can be verified in polynomial time given a certificate for the answer.
As an example, the graph isomorphism problem is in $\NP$ because, given an isomorphism between graphs (a certificate), one can verify that the isomorphism is legit in polynomial time.
Clearly, $\P\subseteq\NP$, since we can ignore the certificate and still solve the problem in polynomial time.
Finally, a problem $B$ is called $\NP$-\emph{hard} if every problem $A$ in $\NP$ reduces to $B$, and a problem is called $\NP$-\emph{complete} if it is both $\NP$-hard and in $\NP$.
In plain speak, $\NP$-hard problems are harder than every problem in $\NP$, while $\NP$-complete problems are the hardest of problems in $\NP$.

At this point, it should be clear that $\NP$-hard problems are not merely problems that seem to require a lot of computation to solve.
Certainly, $\NP$-hard problems have this quality, as an $\NP$-hard problem can be solved in polynomial time only if $\P=\NP$; this is an open problem, but it is 
widely believed that $\P\neq\NP$.
However, there are other problems which seem hard but are not known to be $\NP$-hard (e.g., the graph isomorphism problem).
Rather, to determine whether a problem is $\NP$-hard, one must find a polynomial-time reduction that compares the problem to all problems in $\NP$.
To this end, notice that $A\leq B$ and $B\leq C$ together imply $A\leq C$, and so to demonstrate that a problem $C$ is $\NP$-hard, it suffices to show that $B\leq C$ for some $\NP$-hard problem $B$.

Unfortunately, it can sometimes be difficult to find a deterministic reduction from one problem to another.
One example is reducing the satisfiability problem (\textsc{SAT}) to the unique satisfiability problem (\textsc{Unique~SAT}).
To be clear, \textsc{SAT} is an $\NP$-hard problem~\cite{Karp:ccc72} that asks whether there exists an input for which a given Boolean function returns ``true,'' while \textsc{Unique~SAT} asks the same question with an additional promise: that the given Boolean function is satisfiable only if there is a \emph{unique} input for which it returns ``true.''
Intuitively, \textsc{Unique~SAT} is easier than \textsc{SAT} because we might be able to exploit the additional structure of uniquely satisfiable Boolean functions; thus, it could be difficult to find a reduction from \textsc{SAT} to \textsc{Unique~SAT}.
Despite this intuition, there is a \emph{randomized} polynomial-time reduction from \textsc{SAT} to \textsc{Unique~SAT}~\cite{ValiantV:tcs86}.
Defined over all Boolean functions of $n$ variables, the reduction maps functions that are not satisfiable to other functions that are not satisfiable, and with probability $\geq\frac{1}{8n}$, it maps satisfiable functions to uniquely satisfiable functions.
After applying this reduction to a given Boolean function, if a \textsc{Unique~SAT} oracle declares ``uniquely satisfiable,'' then we know for certain that the original Boolean function was satisfiable.
But the reduction will only map a satisfiable problem to a uniquely satisfiable problem with probability $\geq\frac{1}{8n}$, so what good is this reduction?
The answer lies in something called \emph{amplification}; since the success probability is, at worst, polynomially small in $n$ (i.e., $\geq\frac{1}{p(n)}$), we can repeat our oracle-based randomized algorithm a polynomial number of times $np(n)$ and achieve an error probability $\leq(1-\frac{1}{p(n)})^{np(n)}\sim e^{-n}$ which is exponentially small.

In this section, we give a randomized polynomial-time reduction from a problem in matroid theory.
Before stating the problem, we first briefly review some definitions.
To each bipartite graph with bipartition $(E,E')$, we associate a \emph{transversal matroid} $(E,\mathcal{I})$, where $\mathcal{I}$ is the collection of subsets of $E$ whose vertices form the ends of a matching in the bipartite graph; subsets in $\mathcal{I}$ are called $\emph{independent}$.
Hall's marriage theorem \cite{Hall:jlms35} gives a remarkable characterization of the independent sets in a transversal matroid: $B\in\mathcal{I}$ if and only if every subset $A\subseteq B$ has $\geq|A|$ neighbors in the bipartite graph.
Next, just as spark is the size of the smallest linearly dependent set, the \emph{girth} of a matroid is the size of the smallest subset of $E$ that is not in $\mathcal{I}$.
In fact, this analogy goes deeper: 
A matroid is \emph{representable over a field} $\mathbb{F}$ if, for some $M$, there exists a mapping $\varphi\colon E\rightarrow\mathbb{F}^M$ such that $\varphi(A)$ is linearly independent if and only if $A\in\mathcal{I}$; as such, the girth of $(E,\mathcal{I})$ is the spark of $\varphi(E)$.
In our reduction, we make use of the fact that every transversal matroid is representable over $\mathbb{R}$~\cite{PiffW:jlms70}.
We are now ready to state the problem from which we will reduce \textsc{Full Spark}:

\begin{prob}
\label{prob.girth}
Given a bipartite graph, what is the girth of its transversal matroid?
\end{prob}

Before giving the reduction, we will show that Problem~\ref{prob.girth} is $\NP$-hard.
The result comes from McCormick's thesis~\cite{McCormick:phd83}, which credits the proof to Stockmeyer; since~\cite{McCormick:phd83} is difficult to access and the proof is instructive, we include it below:

\begin{thm}
Problem~\ref{prob.girth} is $\NP$-hard.
\end{thm}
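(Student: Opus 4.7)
The plan is to reformulate girth via Hall's marriage theorem and then reduce from a standard $\NP$-hard covering problem. By Hall's theorem, a subset $B\subseteq E$ is independent in the transversal matroid of the bipartite graph $(E,E')$ if and only if $|N(A)|\ge|A|$ for every $A\subseteq B$. Consequently, the girth is
\[
g=\min\bigl\{|A|:A\subseteq E,~|N(A)|<|A|\bigr\},
\]
the size of the smallest \emph{deficient} subset of $E$, so Problem~\ref{prob.girth} is equivalent to computing this minimum.

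I would then exhibit a polynomial-time reduction from \textsc{Set Cover}, known to be $\NP$-hard. Given an instance with sets $\mathcal{S}=\{S_1,\ldots,S_m\}$ over a universe $U$ and target $k$, the idea is to construct a bipartite graph whose left vertices encode the sets $S_i$ (together with auxiliary ``left dummies'') and whose right vertices encode the elements of $U$ (together with auxiliary ``right dummies'' adjacent to every left vertex). The edges would be arranged so that for any $A\subseteq E$ involving the left-vertices $v_{i_1},\ldots,v_{i_t}$, the quantity $|N(A)|$ is an explicit function of $|\bigcup_j S_{i_j}|$ plus uniform contributions from the two pads. By calibrating the pad sizes as functions of $k$, one tries to arrange that the smallest deficient subset has size some explicit function of $k$ precisely when a cover of size at most $k$ exists, and is strictly larger otherwise.

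The hard part will be engineering the dummy counts so that no unintended small deficient subset arises. A naive version of the construction allows, for instance, a single $S_i$ with $|S_i|$ close to $|U|$ to be deficient on its own, or a small ``over-covering'' subfamily to create deficiency before any minimal cover does. The right-side pad must be large enough to inflate $|N(A)|$ above every small $|A|$ that does not correspond to a cover, and the left-side pad must absorb off-by-one discrepancies without itself contributing circuits; verifying that both directions of the correspondence (cover $\Rightarrow$ small circuit, small circuit $\Rightarrow$ cover) hold exactly is the technical core. Once the calibration is established, the reduction is clearly polynomial, and combined with the $\NP$-hardness of \textsc{Set Cover} (see~\cite{Karp:ccc72}) it yields the claimed $\NP$-hardness of Problem~\ref{prob.girth}.
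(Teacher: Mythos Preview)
Your proposal is not a proof but a plan, and you explicitly flag that the ``hard part'' --- calibrating the pad sizes and verifying both directions of the equivalence --- remains to be done. That is a genuine gap, and in fact the gap is more serious than a missing calculation: the \textsc{Set Cover} source problem is pulling in the wrong direction. Deficiency in the transversal matroid asks for a subset $A\subseteq E$ with \emph{many} left vertices and \emph{few} right neighbors, i.e., $|N(A)|<|A|$. \textsc{Set Cover}, by contrast, asks for \emph{few} sets whose union is \emph{everything}. If left vertices encode sets and right vertices encode elements, then a cover of size $k$ gives $|A|=k$ and $|N(A)|=|U|$ --- maximally non-deficient. Padding with right dummies adjacent to every left vertex only inflates $|N(A)|$ uniformly, and once any cover is included the neighborhood is already the entire right side, so the deficiency threshold no longer sees $k$ at all. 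You would need a construction in which the \emph{existence} of a small cover forces a small deficient set, and it is not clear how pads alone can create that linkage; the ``unintended small deficient subset'' issue you mention is not the main obstacle.

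The paper instead reduces from \textsc{Clique}, which aligns naturally with deficiency. In a graph $G=(V,E)$, a $K$-clique has $\binom{K}{2}$ edges incident to only $K$ vertices --- the extremal ``many edges, few endpoints'' configuration. So one takes the edge--vertex incidence bipartite graph (left side $E$, right side $V$, with $e\leftrightarrow v$ iff $v\in e$) and appends $\binom{K}{2}-K-1$ right dummies adjacent to every edge. Then a set $C\subseteq E$ of size $\binom{K}{2}$ has exactly $\binom{K}{2}-1$ neighbors precisely when the edges in $C$ span only $K$ vertices, i.e., when $C$ is the edge set of a $K$-clique; a short counting argument rules out any smaller deficient set. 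The moral is that \textsc{Clique} encodes exactly the ``dense subgraph'' phenomenon that deficiency measures, whereas \textsc{Set Cover} encodes the opposite extremal behavior, which is why your padding scheme resists calibration.
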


\begin{proof}
We will reduce from the $\NP$-complete clique decision problem, which asks ``Given a graph, does it contain a clique of $K$ vertices?''~\cite{Karp:ccc72}.
First, we may assume $K\geq 4$ without loss of generality, since any such clique can otherwise be found in cubic time by an exhaustive search.
Take a graph $G=(V,E)$, and consider the bipartite graph $G'$ between disjoint sets $E$ and $V\sqcup\{1,\ldots,\binom{K}{2}-K-1\}$, in which $e\leftrightarrow v$ for every $e\in E$ and $v\in e$, and $e\leftrightarrow k$ for every $e\in E$ and $k\in\{1,\ldots,\binom{K}{2}-K-1\}$.
We claim that the girth of the transversal matroid of $G'$ is $\binom{K}{2}$ precisely when there exists a $K$-clique in $G$.

We start by analyzing the girth of a transversal matroid.
Consider any dependent set $C\subseteq E$ with $\leq|C|-2$ neighbors in $G'$.
Then removing any member $x$ of $C$ will produce a smaller set $C\setminus\{x\}$ with $\leq|C|-2=|C\setminus\{x\}|-1$ neighbors in $G'$, which is necessarily dependent by the pigeonhole principle.
Now consider any dependent set $C\subseteq E$ with $\geq|C|$ neighbors in $G'$.
By the Hall's marriage theorem, there exists a proper subset $C'\subseteq C$ with $<|C'|$ neighbors in $G'$, meaning $C'$ is a smaller dependent set.
Thus, the girth $c$ is the size of the smallest subset $C\subseteq E$ with $|C|-1$ total neighbors in $G'$.

Suppose $c=\binom{K}{2}$.
Then since $C$ is adjacent to every vertex in $\{1,\ldots,\binom{K}{2}-K-1\}$, $C$ has 
$(c-1)-(\binom{K}{2}-K-1)=K$ neighbors in $V$.
These are precisely the vertices $D$ in $G$ which are induced by the edges in $C$, and so $C$ is contained in the set $C'$ of edges induced by $D$, of which there are $\leq\binom{|D|}{2}=\binom{K}{2}$, with equality only if $D$ induces a $K$-clique in $G$.
Since $\binom{K}{2}=|C|\leq|C'|\leq\binom{|D|}{2}=\binom{K}{2}$ implies equality, there exists a $K$-clique in $G$.

Now suppose there exists a $K$-clique with edges $C$.
Then $C$ has $\binom{K}{2}$ elements and $K+(\binom{K}{2}-K-1)=\binom{K}{2}-1$ neighbors in $G'$.
To prove that $c=\binom{K}{2}$, it suffices to show that there is no smaller subset $C'\subseteq E$ with $|C'|-1$ total neighbors in $G'$.
Suppose, to the contrary, that $c=\binom{K}{2}-\ell$ for some $\ell>0$.
Then there exists $C'\subseteq E$ with $c$ elements and $(c-1)-(\binom{K}{2}-K-1)=K-\ell$ neighbors in $V$.
Note that each $e\in E$ contains two vertices in $G$, and so by the definition of $G'$, $C'$ necessarily has $K-\ell\geq 2$ neighbors in $V$.
Also, since the $K-\ell$ neighbors of $C'$ in $V$ arise from the subgraph of $G$ induced by $C'$, and since those neighbors induce at most $\binom{K-\ell}{2}$ edges including $C'$, we have $\binom{K}{2}-\ell=c\leq\binom{K-\ell}{2}$.
This inequality simplifies to $\ell\geq 2K-3$, which combines with $K-\ell\geq 2$ to contradict the fact that $\ell>0$.
\end{proof}

Having established that Problem~\ref{prob.girth} is $\NP$-hard, we reduce from it the main problem of this section.
Our proof is specifically geared toward the case where the matrix in question has integer entries; this is stronger than manipulating real (complex) numbers exactly as well as with truncations and tolerances.

\begin{thm}
\label{thm.full spark hard}
\textsc{Full Spark} is hard for $\NP$ under randomized polynomial-time reductions.
\end{thm}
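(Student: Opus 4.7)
The plan is to give a randomized polynomial-time Turing reduction from Problem~\ref{prob.girth} (just shown to be $\NP$-hard) to \textsc{Full Spark}. Given a bipartite graph $G'$ with bipartition $(E,V')$ and an integer $k$, I will construct a single integer matrix $B$ such that, with high probability, $B$ is full spark if and only if the girth of the transversal matroid of $G'$ is at least $k$. Since the girth can be recovered from polynomially many such decisions (e.g.\ by binary search on $k$, and in fact a single call suffices for the clique reduction in the previous theorem), this suffices for $\NP$-hardness.

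The first step is a matrix realization of the transversal matroid. Let $M:=|V'|$ and $N:=|E|$, and form $A\in\mathbb{Z}^{M\times N}$ by setting $A_{v,e}$ to an integer drawn uniformly at random from $\{1,\ldots,R\}$ when $v\sim e$ in $G'$, and to $0$ otherwise. Expanding any square submatrix determinant by the Leibniz formula, each surviving monomial corresponds to a system of distinct representatives for the chosen columns, so the determinant is a nonzero polynomial in the random weights exactly when Hall's marriage condition holds; this is the classical argument that generic bipartite weights realize transversal matroids over $\mathbb{R}$~\cite{PiffW:jlms70}. The Schwartz-Zippel lemma together with a union bound over the at most $2^N$ relevant subsets of columns ensures, for $R$ of polynomial bit length, that $\mathrm{Spark}(A)$ equals the girth of the matroid with probability $1-o(1)$.

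Next, I would reduce ``$\mathrm{Spark}(A)\geq k$?'' to a single \textsc{Full Spark} query via a random linear projection: draw $P\in\mathbb{Z}^{(k-1)\times M}$ with entries uniform in $\{1,\ldots,R\}$ and set $B:=PA$. For any $(k-1)$-subset $S$ of the columns, if $A_S$ is rank-deficient then so is $B_S=PA_S$; if $A_S$ has rank $k-1$, then by the Cauchy-Binet formula $\det(B_S)$ is a nonzero polynomial in the entries of $P$, and Schwartz-Zippel with a union bound over the $\binom{N}{k-1}$ subsets $S$ yields $\det(B_S)\neq 0$ for all such $S$ with high probability. Consequently, $B$ is full spark if and only if every $(k-1)$-subset of the columns of $A$ is linearly independent, i.e.\ $\mathrm{Spark}(A)\geq k$; combined with the previous step, $B$ is full spark if and only if the matroid girth is at least $k$.

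The main technical obstacle is controlling failure probabilities against the exponentially many ``bad events'' introduced at each step---one potential determinant vanishing per subset of columns in each of $A$ and $PA$. Schwartz-Zippel lets us trade logarithmically more bit length in $R$ for a polynomial factor in the inverse failure probability, so all entries of $A$, $P$, and hence $B$ remain at polynomial bit length while the total failure probability falls below any prescribed inverse polynomial. Standard amplification, as the excerpt recalls in the context of the \textsc{SAT}-to-\textsc{Unique~SAT} reduction, then upgrades this to the stated randomized polynomial-time reduction; moreover, $B$ has integer entries, matching the regime flagged by the authors just above the theorem statement.
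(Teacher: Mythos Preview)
Your proposal is correct and follows the same two-step architecture as the paper: first represent the transversal matroid by a random integer matrix (the paper draws entries from $\{1,\dots,N2^{N+1}\}$ and cites~\cite{Marx:tcs09} for the $\tfrac{1}{2}$ success bound, whereas you re-derive this via Schwartz--Zippel), then reduce each threshold question ``is the spark $>K$?'' to a \textsc{Full Spark} instance by a random linear projection to $K$ rows. The one substantive difference is how that projection is chosen and analyzed. The paper builds a fixed $M\times P$ full spark Vandermonde matrix $H$ with $P=M^{3}2^{N+1}$, selects $K$ of its columns uniformly at random to form $H_{\mathcal{K}}$, and tests whether $H_{\mathcal{K}}^{*}F$ is full spark; the analysis is a geometric dimension count exploiting that $H$ is itself full spark, so at each stage at most $M-\dim S$ columns of $H$ can lie in $S^{\perp}$, whence $\mathcal{N}(H_{\mathcal{K}}^{*})$ meets every relevant column span trivially with high probability. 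You instead take every entry of the projector independently random and invoke Cauchy--Binet together with Schwartz--Zippel. Your route is the standard complexity-theoretic move and arguably gives cleaner bookkeeping; the paper's route has the thematic appeal of using an explicit full spark frame to certify the hardness of recognizing full spark. One small correction: the clique reduction of the preceding theorem characterizes the girth as \emph{exactly} $\binom{K}{2}$, so deciding it needs two threshold queries rather than one---but this of course changes nothing.
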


\begin{proof}
We will give a randomized polynomial-time reduction from Problem~\ref{prob.girth} to \textsc{Full Spark}.
As such, suppose we are given a bipartite graph $G$, in which every edge is between the disjoint sets $A$ and $B$.
Take $M:=|B|$ and $N:=|A|$.
Using this graph, we randomly draw an $M\times N$ matrix $F$ using the following process: for each $i\in B$ and $j\in A$, pick the entry $F_{ij}$ randomly from $\{1,\ldots,N2^{N+1}\}$ if $i\leftrightarrow j$ in $G$; otherwise set $F_{ij}=0$.  
In Proposition 3.11 of~\cite{Marx:tcs09}, it is shown that the columns of $F$ form a representation of the transversal matroid of $G$ with probability $\geq\frac{1}{2}$.
For the moment, we assume that $F$ succeeds in representing the matroid.

Since the girth of the original matroid equals the spark of its representation, for each $K=1,\ldots,M$, we test whether $\mathrm{Spark}(F)>K$.
To do this, take $H$ to be some $M\times P$ full spark frame.
We will determine an appropriate value for $P$ later, but for simplicity, we can take $H$ to be the Vandermonde matrix formed from bases $\{1,\ldots,P\}$; see Lemma~\ref{lem.vandermonde}.
We claim we can randomly select $K$ indices $\mathcal{K}\subseteq\{1,\ldots,P\}$ and test whether $H_\mathcal{K}^*F$ is full spark to determine whether $\mathrm{Spark}(F)>K$.
Moreover, after performing this test for each $K=1,\ldots,M$, the probability of incorrectly determining $\mathrm{Spark}(F)$ is $\leq\frac{1}{2}$, provided $P$ is sufficiently large.

We want to test whether $H_\mathcal{K}^*F$ is full spark and use the result as a proxy for whether $\mathrm{Spark}(F)>K$.
For this to work, we need to have $\mathrm{Rank}(H_\mathcal{K}^*F_{\mathcal{K}'})=K$ precisely when $\mathrm{Rank}(F_{\mathcal{K}'})=K$ for every $\mathcal{K}'\subseteq\{1,\ldots,N\}$ of size $K$.
To this end, it suffices to have the nullspace $\mathcal{N}(H_\mathcal{K}^*)$ of $H_\mathcal{K}^*$ intersect trivially with the column space of $F_{\mathcal{K}'}$ for every $\mathcal{K}'$.
To be clear, it is always the case that $\mathrm{Rank}(H_\mathcal{K}^*F_{\mathcal{K}'})\leq\mathrm{Rank}(F_{\mathcal{K}'})$, and so $\mathrm{Rank}(F_{\mathcal{K}'})<K$ implies $\mathrm{Rank}(H_\mathcal{K}^*F_{\mathcal{K}'})<K$.
If we further assume that $\mathcal{N}(H_\mathcal{K}^*)\cap\mathrm{Span}(F_{\mathcal{K}'})=\{0\}$, then the converse also holds.
To see this, suppose $\mathrm{Rank}(H_\mathcal{K}^*F_{\mathcal{K}'})<K$.
Then by the rank-nullity theorem, there is a nontrivial $x\in\mathcal{N}(H_\mathcal{K}^*F_{\mathcal{K}'})$.
Since $H_\mathcal{K}^*F_{\mathcal{K}'}x=0$, we must have $F_{\mathcal{K}'}x\in\mathcal{N}(H_\mathcal{K}^*)$, which in turn implies $x\in\mathcal{N}(F_{\mathcal{K}'})$ since $\mathcal{N}(H_\mathcal{K}^*)\cap\mathrm{Span}(F_{\mathcal{K}'})=\{0\}$ by assumption.
Thus, $\mathrm{Rank}(F_{\mathcal{K}'})<K$ by the rank-nullity theorem.

Now fix $\mathcal{K}'\subseteq\{1,\ldots,N\}$ of size $K$ such that $\mathrm{Rank}(F_{\mathcal{K}'})=K$.
We will show that the vast majority of choices $\mathcal{K}\subseteq\{1,\ldots,P\}$ of size $K$ satisfy $\mathcal{N}(H_\mathcal{K}^*)\cap\mathrm{Span}(F_{\mathcal{K}'})=\{0\}$.
To do this, we consider the columns $\{h_k\}_{k\in\mathcal{K}}$ of $H_\mathcal{K}$ one at a time, and we make use of the fact that $\mathcal{N}(H_\mathcal{K}^*)=\bigcap_{k\in\mathcal{K}}\mathcal{N}(h_k^*)$.
In particular, since $H$ is full spark, there are at most $M-K$ columns of $H$ in the orthogonal complement of $\mathrm{Span}(F_{\mathcal{K}'})$, and so there are at least $P-(M-K)$ choices of $h_{k_1}$ for which $\mathcal{N}(h_{k_1}^*)$ does not contain $\mathrm{Span}(F_{\mathcal{K}'})$, i.e.,
\begin{equation*}
\mathrm{dim}\Big(\mathcal{N}(h_{k_1}^*)\cap\mathrm{Span}(F_{\mathcal{K}'})\Big)=K-1.
\end{equation*}
Similarly, after selecting the first $J$ $h_k$'s, we have $\mathrm{dim}(S)=K-J$, where
\begin{equation*}
S:=\bigcap_{j=1}^J\mathcal{N}(h_{k_j}^*)\cap\mathrm{Span}(F_{\mathcal{K}'}).
\end{equation*}
Again, since $H$ is full spark, there are at most $M-(K-J)$ columns of $H$ in the orthogonal complement of $S$, and so the remaining $P-(M-(K-J))$ columns are candidates for $h_{k_{J+1}}$ that give
\begin{equation*}
\mathrm{dim}\bigg(\bigcap_{j=1}^{J+1}\mathcal{N}(h_{k_j}^*)\cap\mathrm{Span}(F_{\mathcal{K}'})\bigg)
=\mathrm{dim}\Big(\mathcal{N}(h_{k_{J+1}}^*)\cap S\Big)
=K-(J+1).
\end{equation*}
Overall, if we randomly pick $\mathcal{K}\subseteq\{1,\ldots,P\}$ of size $K$, then
\begin{align*}
\mathrm{Pr}\Big(\mathcal{N}(H_\mathcal{K}^*)\cap\mathrm{Span}(F_{\mathcal{K}'})=\{0\}\Big)
&\geq(1-\tfrac{M-K}{P})(1-\tfrac{M-(K-1)}{P})\cdots(1-\tfrac{M-1}{P})\\
&\geq(1-\tfrac{M}{P})^K\\
&\geq 1-\tfrac{MK}{P},
\end{align*}
where the final step is by Bernoulli's inequality.
Taking a union bound over all choices of $\mathcal{K}'\subseteq\{1,\ldots,N\}$ and all values of $K=1,\ldots,M$ then gives
\begin{align*}
\mathrm{Pr}\bigg(\begin{array}{c}\mbox{fail to determine}\\\mbox{$\mathrm{Spark}(F)$}\end{array}\bigg)
&\leq\sum_{K=1}^M\binom{N}{K}\mathrm{Pr}\Big(\mathcal{N}(H_\mathcal{K}^*)\cap\mathrm{Span}(F_{\mathcal{K}'})\neq\{0\}\Big)\\
&\leq\sum_{K=1}^M\binom{N}{K}\frac{MK}{P}\\
&\leq\frac{M^32^N}{P}.
\end{align*}
Thus, to make the probability of failure $\leq\frac{1}{2}$, it suffices to have $P=M^32^{N+1}$.

In summary, we succeed in representing the original matroid with probability $\geq\frac{1}{2}$, and then we succeed in determining the spark of its representation with probability $\geq\frac{1}{2}$.
The probability of overall success is therefore $\geq\frac{1}{4}$.
Since our success probability is, at worst, polynomially small, we can apply amplification to achieve an exponentially small error probability.
\end{proof}

Our use of random linear projections in the above reduction to \textsc{Full Spark} is similar in spirit to Valiant and Vazirani's use of random hash functions in their reduction to \textsc{Unique~SAT}~\cite{ValiantV:tcs86}.
Since their randomized reduction is the canonical example thereof, we find our reduction to be particularly natural.

As a final note, we clarify that Theorem~\ref{thm.full spark hard} is a statement about the amount of computation necessary in the \emph{worst case}.
Indeed, the hardness of \textsc{Full Spark} does not rule out the existence of smaller classes of matrices for which full spark is easily determined.
As an example, Theorem~\ref{thm.uniformly distributed} determines \textsc{Full Spark} in the special case where the matrix is formed by rows of a DFT of prime-power order.
This illustrates the utility of applying additional structure to efficiently solve the \textsc{Full Spark} problem, and indeed, such classes of matrices are rather special for this reason.

\end{document}